\theoremstyle{plain}
\newtheorem{thm}{Theorem}
\newtheorem{cor}[thm]{Corollary}
\newtheorem{theorem}{Theorem}[section]
\newtheorem{lemma}[theorem]{Lemma}
\theoremstyle{definition}
\newtheorem{remark}[theorem]{Remark}
\newcommand\bA{{\mathbb A}}
\newcommand\bG{{\mathbb G}}
\newcommand\bP{{\mathbb P}}
\newcommand\bZ{{\mathbb Z}}
\newcommand\cO{{\mathcal O}}
\newcommand\fm{\mathfrak{m}}
\newcommand\charac{{\rm char}}
\renewcommand\deg{{\rm deg}}
\newcommand\id{{\rm id}}
\newcommand\pr{{\rm pr}}
\DeclareMathOperator\Isol{Isol}
\newcommand\Ker{{\rm Ker}}
\newcommand\Spec{{\rm Spec}}
\DeclareMathOperator\Stab{Stab}
\numberwithin{equation}{section}
\title{Homogeneous varieties under split solvable algebraic groups}
\author{Michel Brion}
\address{Universit\'e Grenoble Alpes, 
Institut Fourier, CS 40700, 38058 Grenoble Cedex 9, France}
\date{}
\begin{document}

\begin{abstract}
We present a modern proof of a theorem of Rosenlicht, asserting that
every variety as in the title is isomorphic to a product of affine lines 
and punctured affine lines. 
\end{abstract}

\maketitle

\section{Introduction}
\label{sec:int}

Throughout this note, we consider algebraic groups and varieties over 
a field $k$. An algebraic group $G$ is \emph{split solvable}
if it admits a chain of closed subgroups
\[ \{ e \} = G_0 \subset G_1 \subset \cdots \subset G_n = G \]
such that each $G_i$ is normal in $G_{i + 1}$ and $G_{i+1}/G_i$ is 
isomorphic to the additive group $\bG_a$ or the multiplicative group 
$\bG_m$. This class features prominently in a series of articles 
by Rosenlicht on the structure of algebraic groups, see 
\cite{Ro56, Ro57, Ro63}. The final result of this series may be stated 
as follows (see \cite[Thm.~5]{Ro63}):

\begin{thm}\label{thm:hom}
Let $X$ be a homogeneous variety under a split solvable algebraic 
group $G$. Then there is an isomorphism of varieties
$X \simeq \bA^m \times (\bA^{\times})^n$ 
for unique nonnegative integers $m$, $n$.
\end{thm}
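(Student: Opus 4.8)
The plan is to induct on $\dim G$, at each stage dividing by a normal one-dimensional subgroup and recognizing the resulting map as a trivial torsor.

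First some reductions. The composition factors $\bG_a$ and $\bG_m$ are connected, so $G$ is connected; write $X = G/H$, where $H$ is the scheme-theoretic stabilizer of a point $x_0$. Uniqueness of $m,n$ is the easy part: one has $m + n = \dim X$, while $n$ is the rank of the free abelian group $\cO(X)^{\times}/k^{\times}$ (for $X = \bA^m \times (\bA^{\times})^n$ the invertible functions are the constants times monomials in the coordinates of the torus factor). Both are isomorphism invariants of $X$, so it suffices to produce a single isomorphism of the asserted shape. The base case $\dim G \le 1$ is immediate: a homogeneous space under $\bG_a$ or $\bG_m$ is a point, the group itself, or its quotient by a finite subgroup, and each of these is isomorphic to $\bA^0$, $\bA^1$ or $\bA^{\times}$.

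For the inductive step I would choose a minimal nontrivial connected normal subgroup $N$ of $G$; by the structure theory of split solvable groups, $N \simeq \bG_a$ or $N \simeq \bG_m$. Put $\bar G := G/N$, a split solvable group with $\dim \bar G = \dim G - 1$, let $\bar H$ be the image of $H$, and form the $G$-equivariant quotient $q : X \to \bar X := X/N = G/NH = \bar G/\bar H$. By the inductive hypothesis $\bar X \simeq \bA^m \times (\bA^{\times})^n$; in particular $\bar X$ is affine with $\Pic(\bar X) = 0$ and $H^1(\bar X, \cO_{\bar X}) = 0$. If $N \subseteq H$ then $q$ is an isomorphism and we are done, so assume $N \not\subseteq H$. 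Then $N_0 := N \cap H$ is a proper, hence finite, subgroup of $N$, and because $N$ is commutative and normal the stabilizer in $N$ of the point $g x_0$ is $g N_0 g^{-1}$. Once $N_0$ is known to be stable under $G$ (equivalently, normal in $G$), every such stabilizer equals $N_0$, the $N$-action on $X$ factors through a free action of $N/N_0 \simeq \bG_a$ or $\bG_m$, and $q$ becomes a torsor under $N/N_0$.

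It remains to trivialize this torsor, and this is the heart of the matter. Since split solvable groups are special, $q$ is Zariski-locally trivial, and a torsor under $N/N_0$ over $\bar X$ is classified by $H^1(\bar X, \cO_{\bar X}) = 0$ in the additive case and by $\Pic(\bar X) = 0$ in the multiplicative case; hence $q$ is trivial and $X \simeq \bar X \times (N/N_0) \simeq \bA^{m'} \times (\bA^{\times})^{n'}$, completing the induction. I expect the main obstacle to be the characteristic-$p$ bookkeeping hidden in the phrase ``once $N_0$ is known to be stable under $G$'': in characteristic $0$ this is automatic, since $\bG_a$ has no nontrivial finite subgroup and the finite subgroups $\mu_d \subset \bG_m$ are characteristic, but in characteristic $p$ one must rule out or absorb a finite intersection $N_0 \subset \bG_a$ that is not preserved by conjugation, so that $q$ genuinely descends to an $N/N_0$-torsor. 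The other point requiring care is the verification that split solvable groups are special and that the relevant cohomology of $\bA^m \times (\bA^{\times})^n$ vanishes, which is what forces every such one-dimensional fibration to split as a product.
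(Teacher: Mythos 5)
Your outline---induct on $\dim G$, quotient by a one-dimensional connected normal subgroup, and trivialize the resulting torsor over the affine base---is the same strategy as the paper's, and your uniqueness argument (via $\dim X$ and the rank of $\cO(X)^{\times}/k^{\times}$) is fine. But there are two genuine gaps. The first is at the very start: you write $X = G/H$ with $H$ the stabilizer of a point $x_0$, which presupposes that $X$ has a $k$-rational point. The theorem is over an arbitrary field, and homogeneity here only means that $(\id,\alpha) \colon G \times X \to X \times X$ is surjective; no rational point is given. That every homogeneous variety under a split solvable group \emph{does} have a $k$-point is itself a theorem of Rosenlicht, and a substantial part of the paper's proof is devoted to it: first the one-dimensional case (Lemma \ref{lem:curve}: a homogeneous curve under $\bG_a$ or $\bG_m$ has a $k$-point, proved via the regular completion and a linear-disjointness argument---this is not formal, since nontrivial forms of $\bA^1$ exist over imperfect fields), then an induction using Galois descent of the quotient $X_K \to X_K/N_K$ to get a $G$-morphism $X \to Y$ defined over $k$ and reduce to a fiber. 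As written, your proof only covers the case where a rational point is handed to you (e.g.\ $k$ algebraically closed).

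The second gap is the characteristic-$p$ issue you flag at the end, which is not bookkeeping but the technical heart of the theorem, and your proposal leaves it unresolved. When $N \simeq \bG_a$ and $p > 0$, the finite subgroup scheme $N_0 = N \cap H \subset \bG_a$ (a kernel of an additive polynomial, possibly infinitesimal) is in general \emph{not} normal in $G$; this cannot be ``ruled out,'' and when it fails the stabilizers in $N$ of points of $X$ genuinely vary, so there is no globally defined free action of a fixed quotient $N/N_0$ and $q$ is simply not an $N/N_0$-torsor. The paper's resolution (Section \ref{sec:add}) is to work with the whole stabilizer subscheme $\Stab_N \subset N \times X$: it is finite and flat over $X$, hence by Lemmas \ref{lem:hilb} and \ref{lem:ker} equals $\Ker(P,\id)$ for a unique monic additive polynomial $P$, whose coefficients are shown by an invariance argument to lie in $\cO(Y) = \cO(X)^N$; one then descends the $N$-action along the faithfully flat endomorphism $(P,\id)$ of the relative group scheme $\bG_{a,Y}$ to obtain a \emph{new} action $\beta$ of $\bG_{a,Y}$ on $X$ for which $f$ is a torsor (Lemma \ref{lem:add}). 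This ``torsor twisted by an endomorphism'' construction is precisely the missing idea behind your word ``absorb.'' Once both gaps are filled, your trivialization step is correct (the base is affine with trivial Picard group, and $\bG_a$, $\bG_m$ are special); note also that the paper arranges the normal chain so that only $\bG_a$-quotients occur in the induction---the torus case is dispatched at once---and that the existence of a normal $N \simeq \bG_a$ when $G$ is not a torus itself needs Conrad's theorem on torus actions on vector groups (Lemma \ref{lem:normal}), since conjugation on a vector group need not be linear in characteristic $p$.
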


Here $\bA^m \simeq (\bA^1)^m$ denotes the affine $m$-space, and
$\bA^{\times} = \bA^1 \setminus \{ 0\}$ the punctured affine line.

Rosenlicht's articles use the terminology and methods of algebraic geometry
\`a la Weil, and therefore have become hard to read. In view of their 
fundamental interest, many of their results have been rewritten in more 
modern language, e.g. in the book \cite{DG} by Demazure \&  Gabriel
and in the second editions of the books on linear algebraic groups by Borel 
and Springer, which incorporate developments on ``questions of rationality''
(see \cite{Borel, Springer}). The above theorem is a notable exception: 
the case of the group $G$ acting on itself by multiplication is handled in 
\cite[Cor.~IV.4.3.8]{DG} (see also \cite[Cor.~14.2.7]{Springer}), 
but the general case is substantially more complicated. 
\footnote{The case where $k$ is algebraically closed and $X = G/H$ 
for some smooth connected subgroup $H \subset G$ is proposed as
an exercise in \cite[\S 14.2]{Springer}.}

The aim of this note is to fill this gap by providing a proof of 
Theorem \ref{thm:hom} in the language of modern algebraic geometry. 
As it turns out, this theorem is self-improving: combined
with Rosenlicht's theorem on rational quotients (see 
\cite[Thm.~2]{Ro56}, and \cite[Sec.~2]{BGR} for a modern proof)
and some ``spreading out'' arguments, it yields the following stronger 
version:

\begin{thm}\label{thm:op}
Let $X$ be a variety equipped with an action of a split solvable 
algebraic group $G$. Then there exist a dense open $G$-stable 
subvariety $X_0 \subset X$ and an isomorphism of varieties
$X_0 \simeq \bA^m \times (\bA^{\times})^n \times Y$
(where $m$, $n$ are uniquely determined nonnegative integers 
and $Y$ is a variety, unique up to birational isomorphism)
such that the resulting projection $f : X_0 \to Y$ is the rational
quotient by $G$.
\end{thm}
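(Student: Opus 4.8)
The plan is to reduce Theorem~\ref{thm:op} to Theorem~\ref{thm:hom} by exhibiting a dense open $G$-stable subvariety on which the $G$-action is, in a precise sense, "homogeneous over its rational quotient." The starting point is Rosenlicht's theorem on rational quotients, which furnishes a $G$-stable dense open $U \subset X$ and a morphism $q : U \to B$ onto a variety $B$ whose generic fiber is a single $G$-orbit (over the function field $k(B)$). The uniqueness of $B$ up to birational isomorphism is built into that theorem, and $B$ will become our $Y$; the integers $m,n$ will be read off from the generic fiber. So the conceptual heart of the argument is to understand the generic fiber and then to \emph{spread out} its structure to an open subvariety of $U$.

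Concretely, I would pass to the generic point $\eta = \Spec k(B)$ and consider the base change $X_\eta := U \times_B \eta$, equipped with the action of $G_\eta := G \times_k k(B)$. By construction $X_\eta$ is a homogeneous variety under the algebraic group $G_\eta$ over the field $K := k(B)$; moreover $G_\eta$ is again split solvable, since splitness of the solvable structure is preserved under base field extension (the chain $\{e\} = G_0 \subset \cdots \subset G_n = G$ base-changes to a chain with the same subquotients $\bG_a$ and $\bG_m$). Applying Theorem~\ref{thm:hom} over the field $K$ yields an isomorphism of $K$-varieties
\[
X_\eta \;\simeq\; \bA^m_K \times (\bA^\times_K)^n
\]
for uniquely determined $m,n \geq 0$. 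This determines $m$ and $n$ and pins down the fiber-by-fiber shape of $X_0 \to Y$ at the generic point.

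The final step is the spreading-out argument. An isomorphism defined over the function field $K = k(B)$ is, by the standard limit formalism (every object and morphism over $k(B)$ descends to some nonempty open of $B$, and finitely many compatibility conditions each hold on a nonempty open), already defined over a nonempty open subvariety $B_0 \subset B$. Thus there is a dense open $q^{-1}(B_0)$ over which the isomorphism $X_\eta \simeq \bA^m_K \times (\bA^\times_K)^n$ propagates to an isomorphism
\[
q^{-1}(B_0) \;\simeq\; \bA^m \times (\bA^\times)^n \times B_0
\]
of varieties over $B_0$. Setting $X_0 := q^{-1}(B_0)$, $Y := B_0$, and $f := q|_{X_0}$, and shrinking further if necessary so that $X_0$ is $G$-stable, we obtain the desired conclusion, with $f$ the rational quotient by $G$. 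Uniqueness of $m$, $n$ follows from their uniqueness in Theorem~\ref{thm:hom} applied to the generic fiber, and uniqueness of $Y$ up to birational isomorphism is exactly the uniqueness of the rational quotient.

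\textbf{The main obstacle} I expect is not in the spreading-out itself, which is routine, but in two points of care. First, one must verify that $G_\eta$ is genuinely split solvable over the \emph{non-algebraically-closed, non-perfect} field $K = k(B)$, and that Theorem~\ref{thm:hom} is available in exactly that generality — this is why the hypotheses of Theorem~\ref{thm:hom} were stated over an arbitrary field $k$, and the reduction only works because no perfectness or algebraic closedness was assumed there. Second, one must ensure that the open subvariety produced by spreading out can be taken $G$-stable and that the isomorphism is compatible with the $G$-action in the sense needed for $f$ to be the rational quotient; this requires checking that the $G$-action on $\bA^m \times (\bA^\times)^n \times B_0$ transported through the isomorphism restricts fiberwise to the homogeneous action from Theorem~\ref{thm:hom}, which again descends from the generic fiber after shrinking $B_0$.
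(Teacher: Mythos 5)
Your proposal is correct, and its skeleton --- Rosenlicht's rational quotient theorem, Theorem~\ref{thm:hom} applied to the generic fiber over $k(Y)$ (where split solvability is indeed preserved by base change), then spreading out --- is exactly the paper's. The genuine difference lies in how the spreading out is executed, which is where the paper spends essentially all of its effort. You spread out the isomorphism $X_\eta \simeq Z_\eta$, with $Z = \bA^m \times (\bA^{\times})^n$, as an isomorphism of schemes over opens of the base, invoking the limit formalism of \cite[IV.8.8, IV.8.10]{EGA}; this identifies the \emph{full preimage} $q^{-1}(B_0)$ with $Z \times B_0$, so your final worry is vacuous: $q^{-1}(B_0)$ is automatically $G$-stable, being the preimage under a $G$-invariant morphism, and no equivariance of the isomorphism is needed for the statement, since the fibers of $q|_{X_0}$ remain exactly the $G$-orbits by Rosenlicht. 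The paper, by contrast, avoids the limit machinery and spreads out by hand only the morphism $Z_\eta \to X$ (via coordinate functions, after embedding $X$ as a closed $Y$-subvariety of $\bA^N \times Y$); this one-sided spreading out produces a morphism $Z \times Y_0 \to X$ that is not a priori an isomorphism onto a union of fibers, and repairing this forces the paper through several steps you never need: reduction to $X$ affine (Sumihiro's theorem plus a Borel fixed point argument), normality of $X$ together with Zariski's Main Theorem to upgrade the morphism to an open immersion, and a separate spreading out of the $G_\eta$-action on $Z_\eta$ to a $G_Y$-action on $Z \times Y_0$ in order to prove the image is $G$-stable. What your route buys is brevity, at the price of citing the EGA limit formalism as a black box; what the paper's route buys is self-containedness and, more importantly, the extra equivariant output (the $G_Y$-action on $Z \times Y_0$ for which $\varphi$ is equivariant), which is precisely what makes the same proof yield Corollary~\ref{cor:add} and the refinements in Remark~\ref{rem:final}. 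If you wanted those consequences from your argument, you would still have to carry out the action-spreading step that you relegate to your closing paragraph of anticipated obstacles.
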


By this, we mean that $f$ yields an isomorphism
$k(Y) \stackrel{\sim}{\to} k(X)^G$, where the left-hand side
denotes the function field of $Y$ and the right-hand side 
stands for the field of $G$-invariant rational functions on $X$; 
in addition, the fibers of $f$ are exactly the $G$-orbits. 

As a direct but noteworthy application of Theorem \ref{thm:op}, 
we obtain:

\begin{cor}\label{cor:rat}
Let $X$ be a variety equipped with an action of a split solvable
algebraic group $G$. Then $k(X)$ is a purely transcendental 
extension of $k(X)^G$.
\end{cor}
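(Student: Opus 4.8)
The plan is to deduce the corollary directly from Theorem \ref{thm:op}. Applying that theorem to the $G$-variety $X$, I obtain a dense open $G$-stable subvariety $X_0 \subset X$, a variety $Y$, nonnegative integers $m, n$, and an isomorphism $X_0 \simeq \bA^m \times (\bA^{\times})^n \times Y$ under which the projection $f : X_0 \to Y$ is the rational quotient by $G$; in particular $f$ induces an isomorphism $k(Y) \stackrel{\sim}{\to} k(X)^G$. Since $X_0$ is dense and open in $X$, restriction of rational functions identifies $k(X) = k(X_0)$, and this identification is $G$-equivariant, so $k(X)^G = k(X_0)^G$. It therefore suffices to prove that $k(X_0)$, viewed as an extension of $k(Y)$ via $f^{\ast}$, is purely transcendental.

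To see this, I would compute the function field of $X_0 \simeq \bA^m \times (\bA^{\times})^n \times Y$ through the generic fiber of the projection $\pi$ to $Y$. Writing $U = \bA^m \times (\bA^{\times})^n$, the product $X_0 = U \times_k Y$ is obtained by base change from $U \to \Spec k$ along $Y \to \Spec k$, so its generic fiber $\pi^{-1}(\eta)$ over the generic point $\eta = \Spec k(Y)$ of $Y$ is $U_{k(Y)} = \bA^m \times (\bA^{\times})^n$ over $k(Y)$. This is an open subvariety of $\bA^{m+n}_{k(Y)}$, hence integral with function field $k(Y)(t_1, \ldots, t_{m+n})$, a purely transcendental extension of $k(Y)$. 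Because $\pi$ is dominant between integral schemes, the generic point of $X_0$ lies in, and is the generic point of, the generic fiber; thus $k(X_0) = k(\pi^{-1}(\eta)) = k(Y)(t_1, \ldots, t_{m+n})$. Combining this with $k(Y) \cong k(X)^G$ and $k(X) = k(X_0)$ shows that $k(X)$ is purely transcendental over $k(X)^G$, as desired.

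Since the corollary is advertised as a direct application, I do not expect a genuine obstacle. The only points requiring care are that the field of invariants is unchanged when passing to the dense open $G$-stable subvariety $X_0$, and the identification of $k(X_0)$ with the function field of the generic fiber of $\pi$ over $k(Y)$---a routine but base-field-sensitive step, which the generic-fiber viewpoint handles uniformly over an arbitrary field $k$.
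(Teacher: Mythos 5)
Your proof is correct and is precisely the ``direct application'' of Theorem \ref{thm:op} that the paper intends (the paper gives no separate argument for this corollary): pass to the dense open $G$-stable $X_0 \simeq \bA^m \times (\bA^{\times})^n \times Y$, note $k(X)=k(X_0)$ and $k(X)^G=k(X_0)^G\simeq k(Y)$, and observe that the function field of the product is purely transcendental over $k(Y)$. The care you take with the generic fiber over $k(Y)$ and the $G$-equivariance of the identification $k(X)=k(X_0)$ is exactly the right bookkeeping.
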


When $k$ is algebraically closed, this gives back the main result 
of \cite{Popov}; see \cite{CZ} for applications to the rationality 
of certain homogeneous spaces.

The proof of Theorem \ref{thm:op} also yields a version of 
\cite[Prop.~14.2.2]{Springer}:

\begin{cor}\label{cor:add}
Let $X$ be a variety equipped with a nontrivial action of $\bG_a$.
Then there exist a variety $Y$, an open immersion 
$\varphi : \bA^1 \times Y \to X$ and a monic additive polynomial
$P \in \cO(Y)[t]$ such that 
\[ g \cdot \varphi(x,y) = \varphi(x + P(y,g), y) \]
for all $g \in \bG_a$, $x \in \bA^1$ and $y \in Y$.
\end{cor}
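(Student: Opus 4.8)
The plan is to read the statement off Theorem \ref{thm:op} applied to the group $G = \bG_a$, and then to make the resulting fibrewise action explicit. First I would apply Theorem \ref{thm:op} to the $\bG_a$-action on $X$, obtaining a dense open $\bG_a$-stable $X_0 \subset X$, an isomorphism $X_0 \simeq \bA^m \times (\bA^{\times})^n \times Y$, and a projection $f : X_0 \to Y$ which is the rational quotient and whose fibres are exactly the $\bG_a$-orbits. Since $\bG_a$ is one-dimensional, every orbit has dimension $\le 1$, and as the action is nontrivial the generic orbit is one-dimensional; being of the form $\bG_a/H_y$ for a finite subgroup scheme $H_y \subset \bG_a$, it is isomorphic to $\bA^1$ (the quotient of $\bG_a$ by a finite subgroup scheme is again $\bG_a$, via the associated additive isogeny). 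A fibre of $f$ is one such orbit, so $\bA^m \times (\bA^{\times})^n \simeq \bA^1$; since $\bA^{\times} \not\simeq \bA^1$ (their unit groups differ), this forces $m = 1$, $n = 0$. Hence $X_0 \simeq \bA^1 \times Y$, and the inclusion gives the desired open immersion $\varphi$ with $f$ the projection to $Y$.

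Next I would determine the action in these coordinates. Because $f$ is the quotient and is $\bG_a$-invariant, $\bG_a$ acts trivially on $Y$ and preserves each fibre $\bA^1 \times \{ y \}$, acting transitively there. After replacing $Y$ by a dense open integral subvariety (so that $\cO(Y)$ is a domain and the order of $H_y$ is constant), every $\cO(Y)$-linear automorphism of $\bA^1_Y = \Spec \cO(Y)[x]$ has the form $x \mapsto u x + v$ with $u \in \cO(Y)^{\times}$ and $v \in \cO(Y)$. Thus the action homomorphism $\bG_a \to \Aut_Y(\bA^1_Y)$ composed with the projection to the multiplicative part is a homomorphism $\bG_a \to \bG_m$, which is necessarily trivial. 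The action is therefore by translations, $g \cdot \varphi(x,y) = \varphi(x + P(y,g), y)$, where $g \mapsto P(y,g)$ is a homomorphism $\bG_a \to \bG_a$ over $Y$; that is, $P \in \cO(Y)[t]$ is additive (of the form $\sum_i a_i t^{p^i}$ in characteristic $p$, or $a_0 t$ in characteristic $0$).

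Finally I would arrange that $P$ is monic. On a dense open of $Y$ the additive polynomial $P(y,-)$ has constant top degree $p^r$ with leading coefficient $a_r \in \cO(Y)^{\times}$; shrinking $Y$ to the locus $\{ a_r \ne 0 \}$ and replacing the fibre coordinate $x$ by $a_r^{-1} x$ replaces $P$ by $a_r^{-1} P$, which is monic and additive, while preserving both the translation form of the action and the open immersion (the coordinate change is an automorphism of $\bA^1 \times Y$ over $Y$). The main obstacle is exactly this last passage together with the regularity claim: one must guarantee that the translation terms assemble into a single additive polynomial with coefficients in $\cO(Y)$, rather than a merely fibrewise family, which is where stabilising the order of the stabilizer group schemes over a dense open of $Y$ and the reducedness of $\cO(Y)$ are used, and where the characteristic-$p$ behaviour (Frobenius-twisted, possibly inseparable orbit maps) must be handled with care.
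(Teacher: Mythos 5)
Your proof is correct, but it is organized genuinely differently from the paper's. The paper does not invoke Theorem \ref{thm:op} as a black box: it reruns the spreading-out argument of Subsection \ref{subsec:proofop}, the one new input being equivariant information at the generic point. Namely, the generic fiber $X_{\eta}$ of the rational quotient is a nontrivial $\bG_{a,\eta}$-homogeneous variety, hence has a rational point by Lemma \ref{lem:curve}, hence equals $\bG_{a,\eta}/H$ for a finite subgroup scheme $H$; writing $H$ as the kernel of a monic additive polynomial $P \in k(Y)[t]$ (as in Lemmas \ref{lem:ker} and \ref{lem:add}) identifies $X_{\eta}$ \emph{equivariantly} with $\bA^1_{\eta}$ on which $\bG_{a,\eta}$ acts by $g \cdot x = x + P(g)$, and this equivariant isomorphism is then spread out over a dense open of $Y$ exactly as in the proof of Theorem \ref{thm:op}. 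You instead quote Theorem \ref{thm:op} as stated, pin down $m = 1$, $n = 0$ from the structure of the generic orbit, and recover the action a posteriori: automorphisms of $\bA^1$ over a domain are affine, the multiplicative part must vanish since $\bG_a$ has no nontrivial character to $\bG_m$, so the action is by translations by an additive polynomial, made monic by inverting the top coefficient and rescaling the fiber coordinate. Your version is more modular and avoids reopening the spreading-out proof; the paper's version produces $P$ more canonically (it is the additive polynomial whose kernel is the generic stabilizer), gets monicness for free, and adapts verbatim to $\bG_m$-actions (Remark \ref{rem:final} (iii)). One step of yours should be tightened: over a non-reduced ring $R$ the scheme $\bA^1_R$ has non-affine automorphisms, so the ``projection to the multiplicative part'' of $\Aut_Y(\bA^1_Y)$ is not defined as a map of group functors, and your homomorphism $\bG_a \to \bG_m$ is not literally functorial. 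The fix is immediate and stays within your argument: apply the affine description to the single universal automorphism of $\bA^1$ over $\cO(\bG_a \times Y) = \cO(Y)[g]$, which is again a domain because $\bG_a$ is geometrically integral; then the multiplicative part lies in $(\cO(Y)[g])^{\times} = \cO(Y)^{\times}$, hence is independent of $g$ and equals $1$ at $g = 0$, and additivity of the translation term follows from the action axiom rather than from a splitting of the automorphism group.
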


Here $P$ is said to be additive if it satisfies
$P(y,t + u) = P(y,t) + P(y,u)$ identically; then $\bG_a$
acts on $\bA^1 \times Y$ via $g \cdot (x,y) = (x + P(y,g),y)$,
and $\varphi$ is equivariant for this action.
If $\charac(k) = 0$, then we have $P = t$ and hence 
$\bG_a$ acts on $\bA^1 \times Y$ by translation on $\bA^1$. 
So Corollary \ref{cor:add} just means that every nontrivial 
$\bG_a$-action becomes a trivial $\bG_a$-torsor
on some dense open invariant subset. On the other hand, if 
$\charac(k) = p > 0$, then $P$ is a $p$-polynomial, i.e.,
\[ P = a_0 t + a_1 t^p + \cdots + a_n t^{p^n} \]
for some integer $n \geq 1$ and $a_0,\ldots, a_n \in \cO(Y)$. 
Thus, the map 
\[ (P,\id) : \bG_a \times Y \longrightarrow \bG_a \times Y, 
\quad (g,y) \longmapsto (P(y,g),y) \]
is an endomorphism of the $Y$-group scheme 
$\bG_{a,Y} = \pr_Y : \bG_a \times Y \to Y$; conversely,
every such endomorphism arises from an additive polynomial
$P$, see \cite[II.3.4.4]{DG}. Thus, Corollary \ref{cor:add} 
asserts that for any nontrivial $\bG_a$-action, there is a dense 
open invariant subset on which $\bG_a$ acts by a trivial torsor 
twisted by such an endomorphism. These twists occur implicitly 
in the original proof of Theorem \ref{thm:hom}, see 
\cite[Lem.~3]{Ro63}.
\footnote{Rosenlicht was very well aware of the limitations of classical
methods. He wrote in the introduction of \cite{Ro63}: 
``The methods of proof we use here are refinements of those of 
our previous Annali paper \cite{Ro57} and cry for improvement;
there are unnatural complexities and it seems that something new 
that is quite general, and possibly quite subtle, must be brought 
to light before appreciable progress can be made.''}

This note is organized as follows. In Section \ref{sec:ssg},
we gather background results on split solvable algebraic groups.
Section \ref{sec:quot} presents further preliminary material,
on the quotient of a homogeneous space $G/H$ by the left action of 
a normal subgroup scheme $N \triangleleft G$; here $G$ is a connected
algebraic group, and $H \subset G$ a subgroup scheme. In particular,
we show that such a quotient is a torsor under a finite quotient
of $N$, if either $N \simeq \bG_m$ or $N \simeq \bG_a$ and
$\charac(k) = 0$ (Lemma \ref{lem:tors}). The more involved case
where $N \simeq \bG_a$ and $\charac(k) > 0$ is handled in
Section \ref{sec:add}; we then show that the quotient is
a ``torsor twisted by an endomorphism'' as above (Lemma 
\ref{lem:add}). The proofs of our main results are
presented in Section \ref{sec:proofs}. 

\medskip

\noindent
{\bf Notation and conventions.}
We consider schemes over a field $k$ of characteristic $p \geq 0$
unless otherwise mentioned. Morphisms and products of schemes are 
understood to be over $k$ as well. A \emph{variety} is an integral 
separated scheme of finite type.

An \emph{algebraic group} $G$ is a group scheme of finite type.
By a \emph{subgroup} $H \subset G$, we mean a (closed) subgroup scheme.
A $G$-\emph{variety} is a variety $X$ equipped with a $G$-action
\[ \alpha : G \times X \longrightarrow X, 
\quad (g,x) \longmapsto g \cdot x. \]
We say that $X$ is $G$-\emph{homogeneous} 
if $G$ is smooth, $X$ is geometrically reduced, and the morphism 
\[ (\id, \alpha) : G \times X \longrightarrow X \times X,
\quad (g,x) \longmapsto (x, g \cdot x) \]
is surjective. If in addition $X$ is equipped with a $k$-rational point 
$x$, then the pair $(X,x)$ is a $G$-\emph{homogeneous space}.
Then $(X,x) \simeq (G/\Stab_G(x),x_0)$, where 
$\Stab_G(x) \subset G$ denotes the stabilizer, and $x_0$ the image 
of the neutral element $e \in G(k)$ under the quotient morphism
$G \to G/\Stab_G(x_0)$.

Given a field extension $K/k$ and a $k$-scheme $X$, we denote by 
$X_K$ the $K$-scheme $X \times_{\Spec(k)} \Spec(K)$.

We will freely use results from the theory of faithfully flat descent, 
for which a convenient reference is \cite[Chap.~14, App.~C]{GW}.

\section{Split solvable groups}
\label{sec:ssg}

We first recall some basic properties of these groups, taken from  
\cite[IV.4.3]{DG} where they are called ``groupes $k$-r\'esolubles''
(see also \cite[\S 16.g]{Milne}). Every split solvable group 
is smooth, connected, affine and solvable. Conversely, every
smooth connected affine solvable algebraic group over an algebraically
closed field is split solvable (see \cite[IV.4.3.4]{DG}).

Clearly, every extension of split solvable groups is split solvable. 
Also, recall that every nontrivial quotient group of $\bG_m$ is isomorphic 
to $\bG_m$, and likewise for $\bG_a$ (see \cite[IV.2.1.1]{DG}). 
As a consequence, every quotient group of a split solvable group 
is split solvable as well.

We now obtain a key preliminary result (a version of 
\cite[Lem.~1]{Ro63}, see also \cite[Cor.~14.3.9]{Springer}):

\begin{lemma}\label{lem:normal}
Let $G$ be a split solvable group. Then there exists
a chain of subgroups
\[ G_0 = \{ e \} \subset G_1 \subset \cdots G_m 
\subset \cdots \subset G_{m + n } = G,\]
where $G_i \triangleleft G$ for $i = 0, \ldots, m + n$ and 
\[ G_{i+1}/G_i \simeq
\begin{cases} \bG_a & \text{ if } i = 0,\ldots, m - 1,\\
              \bG_m & \text{ if } i = m, \ldots, m + n - 1.\\
\end{cases} \]
\end{lemma}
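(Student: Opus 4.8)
The plan is to split the statement into two essentially independent tasks, glued together at the unipotent radical of $G$, and to treat each with its own tool. First I would establish the structural fact that a split solvable group $G$ possesses a normal split unipotent subgroup $U \triangleleft G$ (its unipotent radical) such that the quotient $G/U$ is a split torus, i.e.\ $G/U \simeq \bG_m^{\,n}$. Over $\kb$ this is the standard decomposition of a connected solvable group into a semidirect product of a torus and its unipotent radical; the point specific to the \emph{split} case, which I would either extract from \cite[IV.4.3]{DG} or prove by induction on the length of a defining chain, is that $U$ is already defined over $k$ and $k$-split, and that $G/U$, being a torus (it becomes one over $\kb$) and a quotient of a split solvable group, is itself split. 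This $U$ will be the term $G_m$ of the desired chain, with $m = \dim U$.

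Granting this, the multiplicative part of the chain is immediate. I would choose any chain $1 = \bar{G}_m \subset \bar{G}_{m+1} \subset \cdots \subset \bar{G}_{m+n} = G/U$ with successive quotients $\bG_m$ (for instance the partial coordinate subtori of $\bG_m^{\,n}$), each automatically normal since $G/U$ is commutative, and pull it back along the quotient morphism $\pi : G \to G/U$. Setting $G_{m+j} := \pi^{-1}(\bar{G}_{m+j})$ produces subgroups that are normal in $G$ (preimages of normal subgroups are normal), with $G_m = U$, $G_{m+n} = G$, and $G_{m+j+1}/G_{m+j} \simeq \bG_m$. This already places all the multiplicative factors at the top, as required.

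It remains to filter $U$ itself by subgroups that are normal in all of $G$ (not merely in $U$) with $\bG_a$ quotients, and here I would argue by induction using a $G$-stable copy of $\bG_a$. Since $U$ is unipotent it is nilpotent, so its centre $Z(U)$ is a nontrivial commutative split unipotent group; being characteristic in $U$, it is stable under conjugation by $G$. The conjugation action of $G$ on $Z(U)$ is an action of a split solvable group, hence trigonalizable over $k$, and a bottom step of the resulting flag yields a $G$-stable subgroup $U_1 \simeq \bG_a$. Passing to the induced action of $G/U_1$ on $U/U_1$ and iterating gives a chain $\{e\} = U_0 \subset U_1 \subset \cdots \subset U_m = U$ with each $U_i \triangleleft G$ and $U_{i+1}/U_i \simeq \bG_a$. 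Concatenating this with the multiplicative chain of the second paragraph furnishes the full chain $G_0 \subset \cdots \subset G_m = U \subset \cdots \subset G_{m+n} = G$ asserted in the lemma.

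I expect the main obstacle to be the positive-characteristic case of this last step. When $\charac(k) = 0$, the group $Z(U)$ is a vector group $\bG_a^{\,r}$ on which $G$ acts linearly, so trigonalizability is exactly Lie--Kolchin over the split group $G$ and the $G$-stable $\bG_a$ is a line in a $G$-stable flag. When $\charac(k) = p > 0$, however, $Z(U)$ need not be a vector group and its automorphisms are no longer linear (the endomorphisms of $\bG_a$ now form the twisted polynomial ring generated by Frobenius), so producing a $G$-stable subgroup $\simeq \bG_a$ requires a finer analysis of the commutative split unipotent group $Z(U)$ — for instance via its Frobenius kernels or its maximal subgroup of exponent $p$ — compatible with the $G$-action. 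Securing this $G$-equivariant $\bG_a$-filtration in characteristic $p$ is where the real work lies; the rest of the argument is formal.
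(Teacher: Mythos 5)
Your reduction---find a normal split unipotent $U \triangleleft G$ with $G/U$ a split torus, pull back a chain of subtori for the multiplicative part, and then filter $U$ by subgroups normal in $G$ with $\bG_a$ quotients---is exactly the skeleton of the paper's proof (which runs the same idea as an induction on $\dim(G)$: either $G$ is a split torus or it has a normal $\bG_a$). But your last paragraph concedes the decisive point: in characteristic $p > 0$ you do not produce the $G$-stable copy of $\bG_a$ inside $U$, and that step is not a technicality to be deferred --- it is the entire content of the lemma beyond formal bookkeeping. A proof that ends with ``securing this is where the real work lies'' has not proved the statement.

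There are in fact two concrete problems with starting from $Z(U)$. First, in characteristic $p$ the scheme-theoretic centre of a smooth connected unipotent group need not be smooth, and even its smooth connected part need not be \emph{split}: splitness is not inherited by subgroups of split unipotent groups over nonperfect fields (wound forms of $\bG_a$, such as $y^p = x - t x^p$ over $\bF_p(t)$, occur as subgroups of $\bG_a^2$). So ``$Z(U)$ is a nontrivial commutative split unipotent group'' is itself unjustified. Second, even granted a split commutative centre, it need not be a vector group, and the $T$-action on it need not linearize. The paper fills both holes with two specific citations: by \cite[IV.4.3.14]{DG}, a nontrivial split unipotent $U$ has a central \emph{characteristic} subgroup $V \simeq \bG_a^n$ (a genuine vector group, not merely the centre), hence $V \triangleleft G$ and the conjugation action of $G$ on $V$ factors through the split torus $T = G/U$; then \cite[Thm.~4.3]{Conrad} gives a $T$-equivariant decomposition $V \simeq V_0 \times V'$ with $V_0$ fixed pointwise and $V'$ a vector group with \emph{linear} $T$-action, so a weight line in $V'$ (or, if $V' = 0$, any $\bG_a \subset V$, which is then central in $G$) is the required normal $\bG_a$. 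Your characteristic-zero argument is fine, and your handling of the torus part is correct; what is missing is precisely this pair of structural inputs in characteristic $p$.
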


\begin{proof}
Arguing by induction on $\dim(G)$, it suffices to show that 
either $G$ is a split torus, or it admits a normal subgroup 
$N$ isomorphic to $\bG_a$.

By \cite[IV.4.3.4]{DG}, $G$ admits a normal unipotent subgroup 
$U$ such that $G/U$ is diagonalizable; moreover, $U$ is split
solvable. Since $G$ is smooth and connected, $G/U$ is a split
torus $T$. Also, since every subgroup and every quotient group of
a unipotent group are unipotent, $U$ admits a chain of subgroups 
\[ \{ e \} = U_0 \subset U_1 \subset \cdots \subset U_m = U \]
such that $U_i \triangleleft U_{i+1}$ and $U_{i+1}/U_i \simeq \bG_a$
for any $i = 0, \ldots, m - 1$. By \cite[IV.4.3.14]{DG}, it follows 
that either $U$ is trivial or it admits a central characteristic 
subgroup $V$ isomorphic to $\bG_a^n$ for some integer $n > 0$.
In the former case, $G = T$ is a split torus. In the latter case,
$V \triangleleft G$ and the conjugation action of $G$ on $V$ factors
through an action of $T$. By \cite[Thm.~4.3]{Conrad}, there is a
$T$-equivariant isomorphism of algebraic groups
$V \simeq V_0 \times V'$, where $V_0$ is fixed pointwise by $T$
and $V'$ is a vector group on which $T$ acts linearly. If $V'$ is 
nontrivial, then it contains a $T$-stable subgroup $N \simeq \bG_a$;
then $N \triangleleft G$. On the other hand, if $V'$ is trivial 
then $V$ is central in $G$; thus, every copy of $\bG_a$ in $V$ 
yields the desired subgroup $N$.
\end{proof}

\section{Quotients of homogeneous spaces by normal subgroups}
\label{sec:quot}

Let $G$ be an algebraic group, $H \subset G$ a subgroup,
and $N \triangleleft G$ a smooth normal subgroup. Then 
$H$ acts on $N$ by conjugation. The semi-direct product 
$N \rtimes H$ defined by this action (as in \cite[Sec.~2.f]{Milne}) 
is equipped with a homomorphism to $G$, with schematic image 
the subgroup $NH \subset G$.
Recall that $H \triangleleft NH \subset G$ and 
$NH/H \simeq N/N \cap H$. Denote by 
\[ q : G \longrightarrow G/H, \quad r: G \longrightarrow G/NH \] 
the quotient morphisms. Then $q$ is an $H$-torsor, and hence 
a categorical quotient by $H$. Since $r$ is invariant under the 
$H$-action on $G$ by right multiplication, there exists a unique
morphism $f : G/H \longrightarrow G/NH$
such that the triangle 
\[ \xymatrix{
& G \ar[ld]_{q} \ar[rd]^{r} & \\
G/H \ar[rr]^-{f} & & G/NH \\
} \]
commutes. 

We will also need the following observation (see 
\cite[Prop.~7.15]{Milne}):

\begin{lemma}\label{lem:cart}
With the above notation, the square
\[ \xymatrix{
G \times NH/H \ar[r]^-{a} \ar[d]_{\pr_G} & G/H \ar[d]^{f} \\
G \ar[r]^-{r} & G/NH \\
} \]
is cartesian, where $a$ denotes the restriction of the action 
$G \times G/H \to G/H$ and $\pr_G$ denotes the projection.
\end{lemma}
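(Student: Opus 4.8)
The plan is to show that the canonical comparison morphism from $G \times NH/H$ to the fiber product $(G/H) \times_{G/NH} G$ is an isomorphism by producing an explicit inverse; concretely, I would realize everything as a restriction of the ``shearing'' automorphism of $G \times G/H$. Recall that $f$ is equivariant for the left-translation actions of $G$ on $G/H$ and on $G/NH$ (indeed $r = f \circ q$ and both $q$, $r$ are equivariant), and that the base point satisfies $r(g) = g \cdot r(e)$. The comparison morphism is $\phi : (g,x) \mapsto (g \cdot x, g)$. First I would check that it does land in the fiber product: since $NH/H$ is the fiber $f^{-1}(r(e))$, equivariance gives $f(g \cdot x) = g \cdot f(x) = g \cdot r(e) = r(g)$, so $f \circ a = r \circ \pr_G$, which is also exactly the commutativity of the square.

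For the inverse, I would consider the automorphism $\theta : G \times (G/H) \to G \times (G/H)$, $(g,y) \mapsto (g, g \cdot y)$, whose inverse is $(g,y) \mapsto (g, g^{-1} \cdot y)$. Up to the coordinate swap $G \times (G/H) \simeq (G/H) \times G$, the map $\phi$ is precisely the restriction of $\theta$ to the closed subscheme $G \times NH/H = \pr_2^{-1}(NH/H)$. Since $\theta$ is an isomorphism, it carries this subscheme isomorphically onto $(\pr_2 \circ \theta^{-1})^{-1}(NH/H)$, i.e. onto the locus where $(g,y) \mapsto g^{-1} \cdot y$ lands in $NH/H = f^{-1}(r(e))$. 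Using equivariance again, this locus is cut out by $g^{-1} \cdot f(y) = r(e)$, equivalently --- since $z \mapsto g^{-1} \cdot z$ is an automorphism of $G/NH$ --- by $f(y) = r(g)$; and that is exactly the fiber product. Thus $\theta$ restricts to the desired isomorphism, and unwinding the coordinate swap shows that it agrees with $\phi$ and is compatible with the structure maps of the square. On points the inverse is simply $(y,g) \mapsto (g, g^{-1} \cdot y)$, which factors through $NH/H$ because $f(g^{-1} \cdot y) = g^{-1} \cdot f(y) = g^{-1} \cdot r(g) = r(e)$.

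The step I expect to be the main obstacle is the scheme-theoretic (rather than merely set-theoretic) bookkeeping: one must know that $NH/H$ is the \emph{scheme-theoretic} fiber $f^{-1}(r(e))$, and that the equalities of loci above hold as subschemes, so that the factorization of $(g,y) \mapsto g^{-1} \cdot y$ through $NH/H$ is a genuine morphism of schemes. I would justify $NH/H = f^{-1}(r(e))$ by noting that $f$ is faithfully flat (as $r = f \circ q$ is faithfully flat and $q$ is an fppf cover, flatness descends along $q$), so the fiber may be computed after base change along $r$, where $r^{-1}(r(e)) = NH$ maps onto $q(NH) = NH/H$; the remaining identifications of subschemes then follow formally from the universal property of the fiber product together with the fact that translation by $g$ and the shear $\theta$ are automorphisms. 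Everything else reduces to the group-action identities $g^{-1} \cdot (g \cdot x) = x$ and $g \cdot (g^{-1} \cdot y) = y$.
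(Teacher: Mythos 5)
Your proof is correct, and it takes a genuinely different route from the paper's. The paper constructs no explicit inverse: it starts from the cartesian square expressing that $r$ is an $NH$-torsor (with top row the multiplication $m : G \times NH \to G$), notes that the commutative square with top row $m$, bottom row $a$, and vertical arrows $(\id, q)$ and $q$ is automatically cartesian because those vertical arrows are $H$-torsors, and then deduces the lemma by descending the cartesian property along the faithfully flat $q$. In effect the paper shears upstairs on $G$, where the shear is exactly the torsor property of $r$, and descends; you shear downstairs on $G \times G/H$ directly. What your route buys is an explicit isomorphism at the level of the homogeneous space itself; what it costs is two inputs the paper's proof never needs: the $G$-equivariance of $f$ (your justification from $r = f \circ q$ with $q$ a faithfully flat epimorphism is valid, and is not circular even though the paper only records equivariance later, in Lemma \ref{lem:quot} (i), since that argument does not use Lemma \ref{lem:cart}), and the scheme-theoretic identification $NH/H = f^{-1}(r(e))$. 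You rightly flag the latter as the crux; note that the paper obtains it in the opposite logical order, deducing the fiber description from Lemma \ref{lem:cart} in the proof of Lemma \ref{lem:quot} (ii), so an independent proof of it is genuinely required in your approach.

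On that crux, one sentence needs repair: ``the fiber may be computed after base change along $r$'' is, read literally, circular, since base-changing $f$ along $r$ produces exactly the fiber product $(G/H) \times_{G/NH} G$ that the lemma is about. But the ingredients you list do yield a correct, non-circular argument: $q^{-1}(f^{-1}(r(e))) = (f \circ q)^{-1}(r(e)) = r^{-1}(r(e)) = NH$, while also $q^{-1}(NH/H) = NH$ (the inclusion $NH \hookrightarrow q^{-1}(NH/H)$ is a morphism of $H$-torsors over $NH/H$, hence an isomorphism); since $q$ is faithfully flat, closed subschemes of $G/H$ with equal $q$-preimages coincide, so $f^{-1}(r(e)) = NH/H$. (Faithful flatness of $f$ itself is not needed.) With this in place, the remaining identifications of loci in your argument are sound: they can be checked on functors of points, where they reduce to the equivalence of $g^{-1} \cdot f(y) = r(e)$ and $f(y) = r(g)$ on $T$-points, and a closed subscheme is determined by its functor of points. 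So the shear does restrict to the desired isomorphism, as you claim.
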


\begin{proof}
Since $r$ is an $NH$-torsor, we have a cartesian square
\[ \xymatrix{
G \times NH \ar[r]^-{m} \ar[d]_{\pr_G} & G \ar[d]^{r} \\
G \ar[r]^-{r} & G/NH, \\
} \]
where $m$ denotes the restriction of the multiplication
$G \times G \to G$. Also, the square 
\[ \xymatrix{
G \times NH \ar[r]^-{m} \ar[d]_{(\id, q)} & G \ar[d]^{q} \\
G \times NH/H \ar[r]^-{a} & G/H \\
} \]
is commutative, and hence cartesian since the vertical
arrows are $H$-torsors. As $q$ is faithfully flat, this yields
the assertion by descent.
\end{proof}

For simplicity, we set $X = G/H$ and $Y = G/NH$.
These homogeneous spaces come with base points
$x_0$, $y_0$ such that $f(x_0) = y_0$.

\begin{lemma}\label{lem:quot}

\begin{enumerate}

\item[{\rm (i)}] With the above notation, $f$ is $G$-equivariant 
and $N$-invariant, where $G$ (and hence $N$) acts on $X,Y$ 
by left multiplication.

\item[{\rm (ii)}] $f$ is smooth, surjective, and its fibers are 
exactly the $N$-orbits.

\item[{\rm (iii)}] The morphism
\[ \gamma : N \times X \longrightarrow X \times_Y X, \quad
(n,x) \longmapsto (x, n \cdot x) \]
is faithfully flat.

\item[{\rm (iv)}] The map $f^{\#} : \cO_Y \to f_*(\cO_X)$ 
yields an isomorphism 
$\cO_Y \stackrel{\sim}{\to} f_*(\cO_X)^N$,
where the right-hand side denotes the subsheaf of $N$-invariants.

\item[{\rm (v)}] If $N \cap H$ is central in $G$, then 
$f$ is a $N/N \cap H$-torsor.

\end{enumerate}

\end{lemma}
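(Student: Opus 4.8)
The plan is to handle the five assertions in order, using the cartesian square of Lemma~\ref{lem:cart} as the main engine: it reduces every statement about $f\colon X\to Y$ to a statement about the explicit projection $\pr_G\colon G\times NH/H\to G$, which I can then descend along the faithfully flat morphisms $q$ and $r$. For (i) I would simply unwind the defining relation $f\circ q=r$. Left multiplication by $G$ on $X=G/H$ and on $Y=G/NH$ is induced from left multiplication on $G$, and $r$ is left-$G$-equivariant; since $q$ is an epimorphism, $f$ is $G$-equivariant. For $N$-invariance it suffices that $N$ act trivially on $Y$, and indeed for $n\in N$, $g\in G$ one has $g^{-1}ng\in N\subset NH$ because $N\triangleleft G$, so $ng\cdot NH=g\cdot NH$.

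For (ii) I would invoke Lemma~\ref{lem:cart}: the square exhibits $\pr_G$ as the base change of $f$ along the faithfully flat $r$. Since $NH/H\simeq N/(N\cap H)$ is smooth, $\pr_G$ is smooth and surjective, and both properties descend, so $f$ is smooth and surjective, hence faithfully flat. To identify a fiber, I would further base change the same square along $e\colon\Spec k\to G$ (so that $r\circ e=y_0$); this presents the scheme-theoretic fiber $f^{-1}(y_0)$ as $NH/H$, which is exactly the $N$-orbit of $x_0$. The sharper claim that \emph{every} fiber is an $N$-orbit, rather than merely a union of orbits, I would deduce from (iii), as the surjectivity of $\gamma$ expresses precisely that $N$ acts transitively on each fiber.

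Assertion (iii) is the heart of the matter. I would verify faithful flatness of $\gamma$ after base change along the fppf cover $q\colon G\to X$ applied to the first factor of $X\times_Y X$. Using $G\times_X(X\times_Y X)\simeq G\times_Y X$ together with Lemma~\ref{lem:cart}, the pulled-back morphism takes the explicit form $N\times G\to G\times NH/H$, $(n,g)\mapsto\bigl(g,(g^{-1}ng)H\bigr)$. This factors as the isomorphism $N\times G\to G\times N$, $(n,g)\mapsto(g,g^{-1}ng)$ (conjugation, using $N\triangleleft G$), followed by $\id_G$ times the quotient $N\to N/(N\cap H)\simeq NH/H$; the latter is an $(N\cap H)$-torsor, hence faithfully flat. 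So the base change of $\gamma$ is faithfully flat, and $\gamma$ is faithfully flat by descent along $q$. The delicate point — and what I expect to be the main obstacle — is the bookkeeping of the identifications in Lemma~\ref{lem:cart}, so that the conjugation twist appears with the correct variance and the argument is carried out in families over $G$ rather than merely fibrewise.

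Finally, (iv) follows by combining faithful flatness of $f$ — which makes $\cO_Y$ the equalizer of the two maps $f_*\cO_X\rightrightarrows (f\circ p_1)_*\cO_{X\times_Y X}$ induced by $p_1^{\#}$ and $p_2^{\#}$ — with (iii): since $\gamma$ is faithfully flat and $p_1\circ\gamma=\pr_X$, $p_2\circ\gamma$ is the action morphism, the equalizer condition $p_1^{\#}s=p_2^{\#}s$ is equivalent to $N$-invariance of $s$, whence $\cO_Y\simeq f_*(\cO_X)^N$. For (v), centrality of $N\cap H$ in $G$ forces it to act trivially on $X$: for $m\in N\cap H$ and $g\in G$ one has $g^{-1}mg=m\in H$, so $m\cdot gH=gH$; thus the $N$-action descends to $\bar N:=N/(N\cap H)$ over $Y$. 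Re-examining the base change of (iii): conjugation now descends to an automorphism of $\bar N$ fixing the central $N\cap H$, so the fibrewise map $\bar N\to NH/H$ is an isomorphism and the base-changed $\bar N\times G\to G\times NH/H$ is an isomorphism; descending along $q$ shows $\bar\gamma\colon\bar N\times X\to X\times_Y X$ is an isomorphism. Together with $f$ faithfully flat, this exhibits $f$ as an $\bar N$-torsor.
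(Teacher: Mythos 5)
Your proposal is correct, and for the two central assertions it takes a genuinely different route from the paper. For (i) and the smoothness/surjectivity part of (ii) you do essentially what the paper does (the paper phrases (i) via lifting functorial points along $q$, you via the epimorphism property of $q$ and triviality of the $N$-action on $Y$; both descend smoothness along $r$ using Lemma~\ref{lem:cart}). The divergence is in (iii): the paper applies the fiberwise flatness criterion \cite[IV.11.3.11]{EGA} to the triangle $N\times X\to X\times_Y X\to X$, using (ii) to know that each orbit map $N_K\to X_x$ is faithfully flat; you instead pull $\gamma$ back along the $H$-torsor $q$ on the first factor and identify the result explicitly as the conjugation isomorphism $N\times G\simeq G\times N$ followed by $\id_G\times\bigl(N\to N/(N\cap H)\bigr)$, an $(N\cap H)$-torsor, then descend. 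Your computation is right (I checked the identification: under Lemma~\ref{lem:cart} the pulled-back map is $(n,g)\mapsto\bigl(g,(g^{-1}ng)H\bigr)$, and flatness and surjectivity do descend along the faithfully flat $G\times_Y X\to X\times_Y X$), and your reordering is legitimate: your (iii) nowhere uses (ii), so deferring the ``fibers are exactly the orbits'' claim to a consequence of surjectivity of $\gamma$ creates no circularity. What your route buys is uniformity: the same explicit trivialization makes (v) nearly automatic, since centrality of $N\cap H$ kills the conjugation twist on $\bar N=N/(N\cap H)$ and turns the pulled-back map into an isomorphism, whence $\bar\gamma$ is an isomorphism by descent — whereas the paper proves (v) separately by descending the cartesian square of Lemma~\ref{lem:cart} along $r$. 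Your (iv) also differs: the paper computes $\cO_Y=r_*(\cO_G)^{NH}=f_*\bigl(q_*(\cO_G)^H\bigr)^N$ directly from the torsor property of $q$ and $r$, while you combine the descent exact sequence for the fppf morphism $f$ with faithful flatness of $\gamma$ to convert the equalizer condition into $N$-invariance; both are standard, the paper's being shorter and yours making the role of (iii) transparent. The price of your approach is exactly the bookkeeping you flagged, but it is carried out correctly above.
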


\begin{proof}
(i) Let $R$ be an algebra, $g \in G(R)$ and $x \in X(R)$. 
As $q$ is faithfully flat, there exist a faithfully
flat $R$-algebra $R'$ and $g' \in G(R')$ such that 
$x = g' \cdot x_0$. Then 
$f(g \cdot x) = f(gg' \cdot x_0) = gg' \cdot y_0 
= g \cdot (g' \cdot y_0) = g \cdot f(x)$ 
in $Y(R')$, and hence in $Y(R)$. This yields the 
$G$-equivariance of $f$. 

If $g \in N(R)$ then $gg' = g' n$ for some $n \in N(R')$.
Thus, $f(gg' \cdot x_0) = f(g' \cdot x_0)$, i.e., 
$f(g \cdot x) = f(x)$, proving the $N$-invariance.

(ii) Observe that $NH/H$ is homogeneous under the smooth algebraic 
group $N$, and hence is smooth. Thus, 
$\pr_G: G \times NH/H \to G$ is smooth as well. It follows
that $f$ is smooth by using Lemma \ref{lem:cart} and the 
faithful flatness of $r$. Also, $f$ is surjective since so are 
$\pr_G$ and $r$.

Let $K/k$ be a field extension, $x \in X(K)$, and $y = f(x)$. 
There exist a field extension $L/K$ and $g \in G(L)$ such that 
$x = g \cdot x_0$. Thus, $y = g \cdot y_0$ and the fiber $X_y$ 
satisfies $(X_y)_L = g (X_{y_0})_L$. Also, $X_{y_0} = N \cdot x_0$ 
in view of Lemma \ref{lem:cart} together with the isomorphisms
$N \cdot x_0 \simeq N/N \cap H \simeq NH/H$. Thus,
$(X_y)_L = g \cdot (N x_0)_L = (N g \cdot x_0)_L = (N \cdot x)_L$,
and therefore $X_y = N_K \cdot x$ by descent.

(iii) Consider the commutative triangle
\[ \xymatrix{
N \times X \ar[rr]^-{\gamma} \ar[rd]_{\pr_X} & &
X \times_Y X \ar[ld]^{\pr_1} \\
& X. \\
} \]
Clearly, the morphism $\pr_X$ is faithfully flat. Also, $\pr_1$ 
is faithfully flat, since it is obtained from $f$ by base
change. Moreover, for any field extension $K/k$ and any
$x \in X(K)$, the restriction 
$\gamma_x : N \times x = N_K \to X_x$ is the orbit map
$n \mapsto n \cdot x$, and hence is faithfully flat by (ii). 
So the assertion follows from the fiberwise flatness
criterion (see \cite[IV.11.3.11]{EGA}).

(iv) We have
\[ \cO_Y = r_*(\cO_G)^{NH} = f_* q_*(\cO_G)^{NH}
= f_*(q_*(\cO_G)^H)^N = f_*(\cO_X)^N, \]
since $q$ (resp.~$r$) is a torsor under $H$ (resp.~$NH$).

(v) The subgroup $N \cap H \subset G$ fixes $x_0$ and is
central in $G$. By a lifting argument as in (i), it follows
that $N \cap H$ fixes $X = G \cdot x_0$ pointwise. Thus, 
the $N$-action on $X$ factors uniquely through an action 
of $N/N\cap H$. Since the square 
\[ \xymatrix{
G \times N/N \cap H \ar[r]^-{a} \ar[d]_{\pr_G} & X \ar[d]^{f} \\
G \ar[r]^-{r} & Y \\
} \]
is cartesian (Lemma \ref{lem:cart}) and $r$ is faithfully flat, 
this yields the assertion.
\end{proof}

In view of the assertions (i), (ii), (iii) and (iv), $f$ is 
a geometric quotient by $N$ in the sense of \cite[Def.~0.7]{MFK}.

Next, denote by $\Stab_N \subset N \times X$ the stabilizer,
i.e., the pullback of the diagonal in $X \times_Y X$ 
under $\gamma$. Then $\Stab_N$ is a closed subgroup 
scheme of the $X$-group scheme 
$N_X = (\pr_X : N \times X \to X)$, stable under the $G$-action 
on $N \times X$ via $g \cdot (n,x) = (g n g^{-1}, g \cdot x)$.

\begin{lemma}\label{lem:stab}

\begin{enumerate}

\item[{\rm (i)}] The projection 
$\pr_X :  \Stab_N \to X$ is faithfully flat and $G$-equivariant. 
Its fiber at $x_0$ is $H$-equivariantly isomorphic to $N \cap H$ 
on which $H$ acts by conjugation.

\item[{\rm (ii)}] $\pr_X$ is finite if and only if $N \cap H$ 
is finite.
 
\end{enumerate}

\end{lemma}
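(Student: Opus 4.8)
The plan is to dispose of the equivariance and the fiber computation directly, and then to obtain faithful flatness by descent along the $H$-torsor $q : G \to X$. First, for the $G$-equivariance of $\pr_X$ I would simply compute $\pr_X(g \cdot (n,x)) = \pr_X(gng^{-1}, g \cdot x) = g \cdot x = g \cdot \pr_X(n,x)$; and $\Stab_N$ is manifestly $G$-stable, since $n \cdot x = x$ forces $(gng^{-1}) \cdot (g \cdot x) = gn \cdot x = g \cdot x$. For the fiber at $x_0$, note that $(\Stab_N)_{x_0}$ consists of the pairs $(n,x_0)$ with $n \in N$ and $n \cdot x_0 = x_0$, i.e. $n \in N \cap H$ (using $\Stab_G(x_0) = H$). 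Since $H$ fixes $x_0$, the residual action $h \cdot (n,x_0) = (hnh^{-1},x_0)$ is conjugation, which gives the asserted $H$-equivariant identification of $(\Stab_N)_{x_0}$ with $N \cap H$.

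The main step is faithful flatness. Since $q : G \to X$ is a faithfully flat $H$-torsor, by descent it suffices to show that the base change $G \times_X \Stab_N \to G$ is faithfully flat. Identifying $G \times_X (N \times X)$ with $G \times N$ via $(g,n) \mapsto (g,(n,q(g)))$, the subscheme $G \times_X \Stab_N$ becomes $\{(g,n) : g^{-1} n g \in H\}$, because the condition $n \cdot q(g) = q(g)$ is equivalent to $g^{-1} n g \in H$ (and $g^{-1}ng \in N$ automatically, as $N$ is normal). The crucial observation is that the $G$-indexed conjugation automorphism $(g,n) \mapsto (g, g^{-1} n g)$ of $G \times N$ over $G$ carries this subscheme isomorphically onto $G \times (N \cap H)$, over which the projection to $G$ is just $\pr_G$. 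The latter is faithfully flat because $N \cap H$ is a nonempty $k$-scheme, and descending along $q$ yields the faithful flatness of $\pr_X$, completing (i). I expect this conjugation trivialization to be essentially the only content of the proof; the rest is bookkeeping.

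For (ii), I would run exactly the same base change. Finiteness is fppf-local on the target, so $\pr_X$ is finite if and only if its pullback $G \times_X \Stab_N \to G$ along the faithfully flat $q$ is finite. Under the identification above this pullback is $\pr_G : G \times (N \cap H) \to G$, which is finite precisely when its fiber $N \cap H$ is finite over $k$. Hence $\pr_X$ is finite if and only if $N \cap H$ is finite.

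The one point that requires a little care, rather than any genuine difficulty, is checking that the stated conjugation map is indeed an automorphism of $G \times N$ over $G$ and that it matches up the two subschemes scheme-theoretically (i.e. on $R$-points functorially), not merely on geometric points; this is where I would be most careful, but it is a formal verification given that $N$ is normal in $G$.
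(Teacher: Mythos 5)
Your proof is correct and takes essentially the same route as the paper: both pull back $\pr_X$ along the $H$-torsor $q : G \to X$, identify the resulting scheme with $G \times (N \cap H)$ via conjugation (your automorphism $(g,n) \mapsto (g, g^{-1}ng)$ is precisely the inverse of the paper's isomorphism $G \times (N \cap H) \to Z$, $(g,z) \mapsto g \cdot z$), and then descend faithful flatness and finiteness along $q$. The only difference is presentational: the paper packages the conjugation trivialization as the statement that the $G$-action on the pullback $Z$ identifies it with $G \times \pi^{-1}(e)$.
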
 
 
\begin{proof}
(i) Clearly, $\pr_X$ is equivariant and its fiber 
$\Stab_N(x_0)$ is as asserted. Form the cartesian square
\[ \xymatrix{
Z \ar[r]^-{\pi} \ar[d] & G \ar[d]^{q} \\
\Stab_N \ar[r]^-{\pr_X} & G/H. \\
} \]
Then $Z$ is equipped with a $G$-action such that
$\pi$ is equivariant, with fiber at $e$ being 
$N \cap H$. As a consequence, the morphism
\[ G \times N \cap H \longrightarrow Z, 
\quad (g,z) \longmapsto g \cdot z \]
is an isomorphism with inverse being
$z \mapsto (\pi(z), \pi(z)^{-1} \cdot z)$.
Via this isomorphism, $\pi$ is identified with
the projection $G \times N \cap H \to G$. Thus,
$\pi$ is faithfully flat, and hence so is $\pr_X$. 

(ii) This also follows from the above cartesian square,
since $\pi$ is finite if and only if $N \cap H$ is finite.
\end{proof}

\begin{lemma}\label{lem:tors}
Assume that $N \not\subset H$.

\begin{enumerate}

\item[{\rm (i)}] If $N \simeq \bG_m$ and $G$ is connected,
then $f$ is an $N/N\cap H$-torsor. Moreover,
$N/ N \cap H \simeq \bG_m$.

\item[{\rm (ii)}] If $N \simeq \bG_a$ and $p = 0$,
then $f$ is an $N$-torsor.

\end{enumerate}

\end{lemma}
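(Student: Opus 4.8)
The plan is to show that in both cases the stabilizer subgroup scheme $\Stab_N \subset N \times X$ is trivial, and then to produce an explicit torsor structure via faithfully flat descent. By Lemma \ref{lem:stab}, the fiber of $\pr_X : \Stab_N \to X$ at $x_0$ is $N \cap H$, on which $H$ acts by conjugation; since $\pr_X$ is faithfully flat and $G$-equivariant, triviality of $\Stab_N$ is governed by the scheme-theoretic structure of $N \cap H$. The key point in each case is that $N \cap H$ is a proper subgroup scheme of $N$, so I must analyze which proper subgroup schemes can occur.

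\medskip\noindent\textbf{Case (i): $N \simeq \bG_m$.}
First I would observe that $N \cap H$ is a proper subgroup scheme of $\bG_m$ since $N \not\subset H$. The proper subgroup schemes of $\bG_m$ over a field are exactly the finite groups $\mu_d$ of $d$-th roots of unity, so $N \cap H$ is finite; by Lemma \ref{lem:stab}(ii), $\pr_X$ is then finite. To upgrade ``finite stabilizer'' to ``torsor,'' I would use that $\mu_d \triangleleft N$ is normal (indeed central in $\bG_m$), so by Lemma \ref{lem:quot}(v) applied to the central subgroup $N \cap H$, the morphism $f$ is an $N/N\cap H$-torsor. It remains to identify the quotient: since $N \cap H = \mu_d$ is a finite subgroup of $\bG_m$ and $G$ is connected, the quotient $N/N\cap H = \bG_m/\mu_d$ is again isomorphic to $\bG_m$, as recalled in Section \ref{sec:ssg} (every nontrivial quotient of $\bG_m$ is $\bG_m$). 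The connectedness of $G$ enters to guarantee $N \cap H$ is properly contained, hence that the quotient is nontrivial.

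\medskip\noindent\textbf{Case (ii): $N \simeq \bG_a$ with $p = 0$.}
Here the crucial input is that in characteristic zero $\bG_a$ has no nontrivial proper subgroup schemes: any such subgroup scheme is smooth and connected of dimension $< 1$, hence trivial. Since $N \not\subset H$, the intersection $N \cap H$ is a proper subgroup scheme of $N \simeq \bG_a$, and therefore $N \cap H = \{e\}$. The intersection is automatically central in $G$ (being trivial), so Lemma \ref{lem:quot}(v) applies directly and shows that $f$ is an $N/N\cap H = N$-torsor.

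\medskip\noindent\textbf{Main obstacle.}
The substantive content is the classification of proper subgroup schemes in each case, together with the normality needed to invoke Lemma \ref{lem:quot}(v). In case (i) the only subtlety is that $N \cap H$ could a priori be nonreduced, but the subgroup schemes of $\bG_m$ are precisely the $\mu_d$, all of which are central, so centrality holds without hypothesis. In case (ii) the restriction $p = 0$ is exactly what forces $N \cap H$ to be trivial; in positive characteristic the proper subgroup schemes $\alpha_{p^r}$ and the kernels of additive polynomials obstruct this argument, which is precisely why that case is deferred to Section \ref{sec:add}. Thus the only genuine work is invoking the structure theory of $\bG_m$ and $\bG_a$; the descent machinery is then supplied entirely by Lemma \ref{lem:quot}(v).
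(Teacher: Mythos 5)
Your case (ii) is fine and matches the paper's argument: in characteristic zero the proper subgroup scheme $N \cap H \subset \bG_a$ is trivial, hence trivially central in $G$, and Lemma \ref{lem:quot}(v) applies. But case (i) has a genuine gap. Lemma \ref{lem:quot}(v) requires $N \cap H$ to be central \emph{in $G$}, not in $N$. Your justification --- that $\mu_d$ is ``central in $\bG_m$'' and that ``all of which are central, so centrality holds without hypothesis'' --- is vacuous, since $\bG_m$ is commutative, and it does not verify the hypothesis of the lemma. Centrality in $G$ is genuinely at issue: for example, in $G = \bG_m \rtimes \bZ/2$ with the second factor acting by inversion, the normal subgroup $N = \bG_m$ is not central in $G$, and neither is $\mu_d \subset N$ for $d > 2$. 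This is exactly where the paper uses the rigidity of tori (\cite[Exp.~IX, Cor.~5.5]{SGA3} or \cite[Cor.~12.37]{Milne}): since $G$ is \emph{connected} and $N \simeq \bG_m$ is normal, the conjugation action of $G$ on $N$ is trivial, so $N$ --- and a fortiori $N \cap H$ --- is central in $G$, after which Lemma \ref{lem:quot}(v) applies.

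Relatedly, your account of the role of connectedness is incorrect: you say it ``enters to guarantee $N \cap H$ is properly contained,'' but proper containment is immediate from the hypothesis $N \not\subset H$ and needs no connectedness. Connectedness is needed precisely for the rigidity step above; your disconnected-group example shows it cannot be dispensed with in the argument via Lemma \ref{lem:quot}(v). The rest of your case (i) --- the classification of proper subgroup schemes of $\bG_m$ as the finite $\mu_d$, finiteness of $\pr_X$ via Lemma \ref{lem:stab}(ii), and the identification $N/N \cap H \simeq \bG_m$ --- is correct and agrees with the paper.
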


\begin{proof}
(i) In view of the rigidity of tori (see \cite[Exp.~IX, Cor.~5.5]{SGA3} 
or \cite[Cor.~12.37]{Milne}), $N$ is central in $G$. Also, $N \cap H$ 
is a finite subgroup of $N$, and hence $N/N \cap H \simeq \bG_m$. 
So we conclude by Lemma \ref{lem:quot} (v). 

(ii) Likewise, $N \cap H$ is a finite subgroup of $\bG_a$,
and hence is trivial since $p = 0$. So we conclude by
Lemma \ref{lem:quot} (v) again.
\end{proof}

\section{Quotients by the additive group}
\label{sec:add}

We first record two preliminary results, certainly well-known
but for which we could locate no appropriate reference.

\begin{lemma}\label{lem:hilb}
Let $X$ be a locally noetherian scheme. Let 
$Z \subset \bA^1 \times X$ be a closed subscheme such that
the projection $\pr_X: Z \to X$ is finite and flat. 
Then $Z$ is the zero subscheme of a unique monic polynomial
$P \in \cO(X)[t]$.
\end{lemma}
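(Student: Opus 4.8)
The plan is to reduce to the universally accepted description of finite flat subschemes of the affine line over a ring, handling the globalization over $X$ by the uniqueness.

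First I would reduce to the case where $X = \Spec(A)$ is affine, since the statement is local on $X$ and the uniqueness will let us glue. So let $A$ be a noetherian ring, let $B = A[t]$ be the coordinate ring of $\bA^1 \times X$, and let $Z$ be cut out by an ideal $I \subset B$, so that $\cO(Z) = B/I = A[t]/I$ is a finite flat $A$-module. The key structural input is that a finite flat module over a noetherian ring is locally free of some rank; since $X$ is connected-or-not, the rank may vary on components, but by the uniqueness clause we may work on a connected component and assume the rank is a constant $r$. The heart of the matter is then to produce the monic polynomial: I would consider the $A$-linear endomorphism $\mu_t : A[t]/I \to A[t]/I$ given by multiplication by $t$, and take $P$ to be its characteristic polynomial, which is automatically monic of degree $r$ with coefficients in $A = \cO(X)$.

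The main work is then to verify that $I = (P)$, i.e. that the zero subscheme of $P$ equals $Z$ scheme-theoretically. By the Cayley--Hamilton theorem, $P(\mu_t) = 0$, so $P(t)$ annihilates $A[t]/I$, giving $P \in I$ and hence an inclusion $(P) \subseteq I$ and a surjection $A[t]/(P) \twoheadrightarrow A[t]/I$. Since $P$ is monic of degree $r$, the quotient $A[t]/(P)$ is free of rank $r$ over $A$; and $A[t]/I$ is locally free of rank $r$ as well. A surjection between locally free modules of the same finite rank over a noetherian (indeed, over any commutative) ring is an isomorphism, which one checks fibrewise over $\Spec(A)$ using Nakayama's lemma. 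This forces $I = (P)$.

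The step I expect to be the main obstacle is not any single computation but the bookkeeping of the rank: the rank of $Z$ over $X$ is a locally constant function, and the polynomial $P$ will have locally constant degree, so the globalization requires reassembling the locally defined $P$ on overlaps. Here the uniqueness statement is what does the work: on any two affine opens where the rank agrees, the characteristic polynomial of $\mu_t$ is intrinsically defined, so the two local polynomials must coincide on the overlap, and they glue to a global $P \in \cO(X)[t]$ that is monic on each component (with degree equal to the local rank). I would close by remarking that uniqueness of $P$ is immediate, since two monic polynomials generating the same ideal in $\cO(X)[t]$ must be equal: their difference lies in $(P)$ and has strictly smaller degree, forcing it to vanish.
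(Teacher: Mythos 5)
Your proof is correct, but it takes a genuinely different route from the paper's. The paper localizes at a point: over a local ring $A$ with residue field $K$, the closed fiber of $Z$ is cut out by a unique monic $P_0 \in K[t]$ of some degree $n$, so $1, t, \ldots, t^{n-1}$ maps to a $K$-basis of the fiber; since $\cO(Z)$ is finite flat, hence free, over $A$, Nakayama's lemma lifts this to an $A$-basis of $\cO(Z)$, and writing $t^n$ in that basis produces the monic $P$, with $A[t]/(P) \to \cO(Z)$ an isomorphism because it sends a basis to a basis; the general case then follows from the local one via the uniqueness clause. You instead define $P$ intrinsically as the characteristic polynomial of multiplication by $t$ on the finite locally free module $A[t]/I$, obtain $(P) \subseteq I$ from Cayley--Hamilton, and conclude $I = (P)$ because a surjection between finite locally free modules of the same rank is an isomorphism. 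Each route has its advantages: yours defines $P$ in one stroke on every affine open, and the basis-independence of the characteristic polynomial makes the local data agree on overlaps without any appeal to uniqueness; you also make explicit the locally-constant-degree bookkeeping on disconnected $X$, which the paper leaves implicit. The paper's argument is more elementary, needing only Nakayama over a local ring, whereas yours invokes Cayley--Hamilton for projective modules and the equal-rank surjection criterion (both standard). One small touch-up to your uniqueness argument: before asserting that $P - Q$ has degree strictly less than $\deg P$, note that $(P) = (Q)$ with $P, Q$ monic forces $\deg P = \deg Q$ (for instance because $A[t]/(P)$ and $A[t]/(Q)$ are free of ranks $\deg P$ and $\deg Q$); after that, the observation that a nonzero multiple of a monic polynomial of degree $n$ has degree at least $n$ finishes the claim.
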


\begin{proof}
First consider the case where $X = \Spec(A)$, where 
$A$ is a local algebra with maximal ideal $\fm$ and
residue field $K$. Denoting by $x$ the closed point
of $X$, the fiber $Z_x$ is a finite subscheme of
$\bA^1_K$. Thus, $Z_x = V(P)$ for a unique monic 
polynomial $P \in K[t]$. So the images of 
$1,t, \ldots,t^{n-1}$ in $\cO(Z_x)$ form a basis
of this $K$-vector space, where $n = \deg(P)$.
Also, $\cO(Z)$ is a finite flat $A$-module, hence free.
By Nakayama's lemma, the images of $1,t, \ldots,t^{n-1}$ 
in $\cO(Z)$ form a basis of this $A$-module. So we have
$t^n + a_1 t^{n-1} + \cdots + a_n = 0$ in $\cO(Z)$ for
unique $a_1,\ldots,a_n \in A$. Thus, the natural map
$A[t]/(t^n + a_1 t^{n-1} + \cdots + a_n) \to \cO(Z)$
is an isomorphism, since it sends a basis to a basis.
This proves the assertion in this case.

For an arbitrary scheme $X$, the assertion holds
in a neighborhood of every point by the local case.
In view of the uniqueness of $P$, this completes the proof.
\end{proof}

\begin{lemma}\label{lem:ker}
Let $X$ be a locally noetherian scheme, and 
$H \subset \bG_{a,X}$ a finite flat subgroup scheme.
Then $H = \Ker(P,\id)$ for a unique monic additive polynomial 
$P \in \cO(X)[t]$, where $(P,\id)$ denotes the endomorphism 
\[ \bG_{a,X} \longrightarrow \bG_{a,X}, \quad
(g,x) \longmapsto (P(x,g), x). \]
\end{lemma}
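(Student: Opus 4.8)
The plan is to combine the previous lemma (Lemma \ref{lem:hilb}) with the additive structure of $H$. First I would apply Lemma \ref{lem:hilb} to the closed subscheme $H \subset \bA^1 \times X$: since $H$ is finite and flat over $X$, there is a unique monic polynomial $P \in \cO(X)[t]$ whose zero subscheme is $H$. The entire content of the lemma then reduces to showing that this $P$ is \emph{additive}, i.e. that $P(x, t+u) = P(x,t) + P(x,u)$ identically, and that $H = \Ker(P,\id)$. The latter is essentially a restatement: by construction $H = V(P) = \{(g,x) : P(x,g) = 0\}$, which is exactly the kernel of the endomorphism $(P,\id)$, provided we know $(P,\id)$ really is a group endomorphism — and that is precisely what additivity of $P$ buys us (via the correspondence with endomorphisms of $\bG_{a,X}$ recalled in the introduction, citing \cite[II.3.4.4]{DG}).

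So the crux is additivity. My approach would be to exploit that $H$ is a \emph{subgroup} scheme, not merely a closed subscheme. Consider the two pullbacks of $P$ under the multiplication and the coordinate projections on $\bG_{a,X} \times_X \bG_{a,X}$. Because $H$ is stable under the group law, the subscheme $\{(g,h,x) : (g,x) \in H,\ (h,x) \in H\}$ maps into $H$ under $(g,h,x) \mapsto (g+h,x)$, which translates into a divisibility statement among polynomials. Concretely, I would examine $Q(x,t,u) := P(x, t+u)$ as a polynomial in $t$ (with coefficients in $\cO(X)[u]$) and compare it with $P(x,t)$: vanishing of $P(x,t)$ and $P(x,u)$ should force vanishing of $P(x,t+u)$, which, together with degree bookkeeping and monicity, pins down the shape of $P$.

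The cleanest route to additivity, which I would prefer, is to reduce to the case where $X$ is the spectrum of an algebraically closed field and then spread out, using the uniqueness of $P$ to glue. Over an algebraically closed field $K$, a finite subgroup scheme $H \subset \bG_a$ is well understood: its underlying reduced subgroup is a finite $\bF_p$-vector space of additive translations, and $H$ itself is the kernel of a monic additive (i.e.\ $p$-)polynomial — this is the classical classification of infinitesimal and étale subgroups of $\bG_a$ in characteristic $p$. In characteristic zero the only finite subgroup is trivial, giving $P = t$. Thus fiberwise the polynomial cutting out $H$ is additive; since additivity is a closed condition on the coefficients (the identity $P(x,t+u) = P(x,t)+P(x,u)$ amounts to polynomial equations in the $a_i$), and these coefficients are the \emph{same} functions in $\cO(X)[t]$ that appear fiberwise by the uniqueness in Lemma \ref{lem:hilb}, additivity over the generic fiber (or over each fiber) propagates to $\cO(X)$ because $X$ is reduced and the fibers are dense.

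The main obstacle I anticipate is making the "fiberwise-to-global" passage rigorous without assuming $X$ is integral or that $\cO(X)$ is reduced — the statement is for an arbitrary locally noetherian $X$. To handle this I would argue locally and compare the two monic polynomials $P(x,t+u)$ and $P(x,t) + P(x,u) - P(x,0)$ as elements of $\cO(X)[t,u]$ cutting out the same subscheme, then invoke uniqueness directly at the level of the group structure rather than passing through fibers: the addition morphism $H \times_X H \to H$ gives a commuting diagram that, after applying Lemma \ref{lem:hilb} to both sides, equates two monic polynomials and hence their coefficients. This avoids reducedness issues entirely and keeps the whole argument within the scheme-theoretic framework already set up. I would isolate the purely algebraic identity "the polynomial defining a subgroup scheme of $\bG_a$ is additive" as the one computation to carry out carefully.
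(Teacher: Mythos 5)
Your reduction matches the paper's: apply Lemma \ref{lem:hilb} to obtain the unique monic $P$ with $H = V(P)$, note that the remaining content is (a) $P$ is additive, and (b) $H = \Ker(P,\id)$ then follows formally. But the paper's proof \emph{is} the additivity computation, and your proposal never carries it out. Scheme-theoretically, the subgroup property yields only the ideal membership $P(t+u) \in (P(t),P(u))$ in $\cO(X)[t,u]$, coming from $H \times_X H \subset m^{-1}(H)$. The paper then writes $P(t+u)-P(t)-P(u) = Q(t,u)P(t)+R(t,u)P(u)$, divides $Q$ by $P(u)$ with remainder in the variable $u$, and uses the degree bounds $\deg_u < n$, $\deg_t < n$ together with monicity to force both correction terms to vanish. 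This bookkeeping, which you explicitly defer as ``the one computation to carry out carefully,'' is the entire content of the lemma; identifying it as the crux is not the same as proving it.

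Neither of your two routes around it closes the gap. The fiberwise route (classify finite subgroup schemes of $\bG_a$ over fields, then spread out) fails exactly where you say it does: $X$ is an arbitrary locally noetherian scheme, possibly non-reduced, so fiberwise additivity constrains the coefficients $a_i$ only modulo nilpotents. Your proposed fix is circular: you want to compare $P(x,t+u)$ and $P(x,t)+P(x,u)-P(x,0)$ as two monic polynomials ``cutting out the same subscheme'' (say over the base $\Spec(\cO(X)[u])$, with fiber coordinate $t$) and conclude by the uniqueness in Lemma \ref{lem:hilb}; but the assertion that these two polynomials define the same closed subscheme is \emph{equivalent} to the additivity identity being proved. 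The subgroup property of $H$ does not give this equality of subschemes --- it gives only the ideal membership above, which is strictly weaker, and $V(P(t)+P(u)) = m^{-1}(H)$ is precisely what is in question. (The idea can be repaired, but not for free: the subgroup property implies that the shear $(t,u)\mapsto(t+u,u)$ preserves $H\times_X H$ viewed as a finite flat scheme over the base $H = \Spec(\cO(X)[u]/(P(u)))$, and uniqueness in Lemma \ref{lem:hilb} over that base yields $P(t+u)\equiv P(t) \pmod{P(u)}$; comparing coefficients of $u^n$ then finishes. Even this variant requires an explicit degree argument of the same nature as the paper's, not a purely formal appeal to uniqueness.)
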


\begin{proof}
We may assume that $X$ is affine by the uniqueness property. 
Let $X = \Spec(A)$, then $H = V(P)$ for a unique monic 
polynomial $P \in A[t]$ (Lemma \ref{lem:hilb}). We now adapt
an argument from \cite[IV.2.1.1]{DG} to show that $P$ is 
an additive polynomial.

Denote by 
$m: \bG_{a,X} \times_X \bG_{a,X} \to \bG_{a,X}$
the group law. Since $H$ is a subgroup scheme, we have 
$H \times_X H \subset m^{-1}(H)$.
Considering the ideals of these closed subschemes of 
$\bG_{a,X} \times_X \bG_{a,X} \simeq 
\bG_a \times \bG_a \times X = \Spec(A[t,u])$
yields that $P(t  + u) \in (P(t),P(u))$ in $A[t,u]$. So there
exist $Q, R \in A[t,u]$ such that
\[ P(t + u) - P(t) - P(u) = Q(t,u) P(t) + R(t,u) P(u). \]
Since $P$ is monic, there exist unique $Q_1,Q_2 \in A[t,u]$
such that 
\[ Q(t,u) = Q_1(t,u) P(u) + Q_2(t,u), \quad 
\deg_u(Q_2) < \deg(P) = n. \]
Thus, we have
\[ P(t + u) - P(t) - P(u) - Q_2(t,u) P(t) = 
(Q_1(t,u) P(t) + R(t,u)) P(u). \]
As the left-hand side has degree in $u$ at most $n-1$, it follows 
that $Q_1(t,u) P(t) + R(t,u) = 0$ and
$P(t + u) - P(t) - P(u) = Q_2(t,u) P(t)$. Considering the
degree in $t$, we obtain $Q_2 = 0$ and $P(t + u) = P(t) + P(u)$
identically. 
\end{proof}

Next, we return to the setting of Section \ref{sec:quot}:
$G$ is an algebraic group, $H \subset G$ a subgroup, 
$N \triangleleft G$ a smooth normal subgroup, and
$f : X = G/H \to G/NH = Y$ the natural morphism.
Since $f$ is $N$-invariant (Lemma \ref{lem:quot} (i)),
we may view $X$ as an $Y$-scheme equipped with an action 
of the $Y$-group scheme $N_Y$.

\begin{lemma}\label{lem:add}
Assume in addition that $N \simeq \bG_a$ and $N \not\subset H$.
Then there exist a faithfully flat morphism of $Y$-group schemes
$\varphi : N_Y \to \bG_{a,Y}$ and a $\bG_{a,Y}$-action on $X$ 
such that $f$ is a $\bG_{a,Y}$-torsor.
\end{lemma}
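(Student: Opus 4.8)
The plan is to understand the situation pointwise first: since $N \simeq \bG_a$ and $N \not\subset H$, the intersection $N \cap H$ is a proper subgroup of $\bG_a$, hence a finite subgroup scheme (every proper closed subgroup of $\bG_a$ is finite). By Lemma \ref{lem:stab}, the stabilizer $\Stab_N \subset N_X$ is a closed subgroup scheme whose projection to $X$ is finite (by part (ii), since $N \cap H$ is finite) and faithfully flat (by part (i)). Thus $\Stab_N$ is a finite flat subgroup scheme of $\bG_{a,X}$. Applying Lemma \ref{lem:ker}, I obtain a unique monic additive polynomial $P \in \cO(X)[t]$ with $\Stab_N = \Ker(P,\id)$, giving a faithfully flat endomorphism $(P,\id) : \bG_{a,X} \to \bG_{a,X}$ whose kernel is exactly the stabilizer.

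The key idea is then that quotienting the $N_X$-action by its stabilizer should produce a \emph{free} action, and that free action should be the torsor structure we seek. Concretely, $(P,\id)$ realizes $\bG_{a,X}$ as the quotient $N_X / \Stab_N$, so the original $N_X$-action on $X$ descends through $(P,\id)$ to an action of $\bG_{a,X}$ on $X$ which is now scheme-theoretically free (its stabilizer is trivial). Combined with Lemma \ref{lem:quot}, which already tells us that $f$ is a geometric quotient by $N$ whose fibers are exactly the $N$-orbits, this freeness should upgrade $f$ into a $\bG_{a,X}$-torsor. The remaining task is to descend $P$ and the resulting group scheme and action from $X$ down to $Y$: the true statement asserts a morphism $\varphi : N_Y \to \bG_{a,Y}$ over the base $Y$, not merely over $X$.

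This descent is where the main obstacle lies. A priori $P$ is defined over $X$, but for the torsor structure and the endomorphism to live over $Y$ I must show that $\Stab_N$, hence $P$, is pulled back from $Y$ — equivalently that $\Stab_N$ is $N$-invariant on $X$ in the appropriate sense, or that the coefficients of $P$ are $N$-invariant functions and so descend via Lemma \ref{lem:quot}(iv). The natural way to see this is to use the $G$-equivariance built into $\Stab_N$: recall from Lemma \ref{lem:stab} that $\Stab_N$ is stable under the $G$-action $g \cdot (n,x) = (gng^{-1}, g\cdot x)$, and since $f$ is $G$-equivariant, the $G$-action permutes the fibers of $\pr_X$ compatibly with the $G$-action on $Y$. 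Because $N$ is normal and acts trivially on itself by conjugation up to the relevant identifications, the stabilizer subgroup scheme is constant along $N$-orbits, i.e. along the fibers of $f$. I would make this precise by checking that $\Stab_N$ is the pullback under $\pr_X$ of its restriction along a section, using the cartesian square of Lemma \ref{lem:cart}; descent along the faithfully flat $f$ then yields a finite flat subgroup scheme of $\bG_{a,Y}$ whose pullback is $\Stab_N$. Applying Lemma \ref{lem:ker} over $Y$ produces the desired $P \in \cO(Y)[t]$ and the faithfully flat $\varphi = (P,\id) : N_Y \to \bG_{a,Y}$.

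Once $\varphi$ is in place, I would define the $\bG_{a,Y}$-action on $X$ as the descent of the $N_Y$-action through $\varphi$, and verify the torsor property directly over $Y$: the action map together with the projection, $(\mathrm{pr}_X, \text{action}) : \bG_{a,Y} \times_Y X \to X \times_Y X$, is an isomorphism. This follows because $\varphi$ identifies $\bG_{a,Y}$ with $N_Y/\Stab_N$, so the induced map on $X$ has trivial stabilizer, while Lemma \ref{lem:quot}(iii) supplies faithful flatness of the orbit map and Lemma \ref{lem:quot}(ii) ensures the fibers of $f$ are single orbits; a free transitive faithfully flat action with $f$ its geometric quotient is exactly a torsor. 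The only delicate point beyond the descent of $P$ is confirming that the descended action is genuinely free (trivial stabilizer over all of $Y$, not just at points), which is immediate from $\Ker(\varphi) = \Stab_N$ and the construction.
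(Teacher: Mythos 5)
Your proposal is correct in substance and follows the paper's overall skeleton (Lemma \ref{lem:stab} plus Lemma \ref{lem:ker} to realize $\Stab_N$ as the kernel of a monic additive polynomial, descent of this datum to $Y$, descent of the $N$-action through $\varphi$, and verification of the torsor property), but you implement the two delicate steps by genuinely different means. For the descent to $Y$ --- the heart of the proof --- the paper works with the polynomial: commutativity of $N$ makes $\Stab_N$ stable under $(n,x) \mapsto (n, g \cdot x)$ for $g \in N(k)$, whence $P(g \cdot x, t) = P(x,t)$; this is upgraded to $N$-invariance of the coefficients of $P$ by density of $N(K)$ in $N_K$ for infinite extensions $K/k$ (using that the formation of $\Stab_N$ commutes with base change), and then $\cO(X)^N = \cO(Y)$ (Lemma \ref{lem:quot}(iv)) gives $P \in \cO(Y)[t]$. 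You instead descend the subgroup scheme itself: the two pullbacks of $\Stab_N$ to $\bG_a \times (X \times_Y X)$ coincide --- this can be checked after pulling back along the faithfully flat $\gamma$ of Lemma \ref{lem:quot}(iii), where it becomes the scheme-theoretic identity that the stabilizer of $n \cdot x$ is the conjugate $n \Stab_N(x) n^{-1} = \Stab_N(x)$ --- so fppf descent along $f$ (effective here, closed immersions being affine) produces a finite flat subgroup $K \subset \bG_{a,Y}$ with $K \times_Y X = \Stab_N$, and Lemma \ref{lem:ker} applied over $Y$ yields $P \in \cO(Y)[t]$ directly. Your route avoids the density-of-rational-points argument and its base-change bookkeeping, at the price of invoking effectivity of descent for closed subschemes; the paper's route stays with invariant functions and needs only Lemma \ref{lem:quot}(iv). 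Similarly, for the torsor property the paper applies the fiberwise isomorphism criterion to $\delta : (t,x) \mapsto (\beta(t,x),x)$, while you argue that $\delta$ is a monomorphism (freeness, from $\Ker(\varphi) \times_Y X = \Stab_N$) and faithfully flat (flatness of $\gamma$ descends along the faithfully flat $\varphi \times \id$), hence an isomorphism; both arguments are sound. Two slips to repair when writing this up: the phrase ``the pullback under $\pr_X$ of its restriction along a section'' is not meaningful, since $f$ has no section in general --- what you actually need is exactly the equality of the two pullbacks of $\Stab_N$ to $X \times_Y X$ described above; and the triviality of the conjugation action of $N$ on itself comes from the commutativity of $\bG_a$, not from the normality of $N$ in $G$.
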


\begin{proof}
By Lemma \ref{lem:stab}, the stabilizer $\Stab_N$ is finite 
and flat over $Y$. Thus, $\Stab_N = \Ker(P, \id)$
for a unique monic $p$-polynomial $P \in \cO(X)[t]$
(Lemma \ref{lem:ker}). Also,
$\Stab_N \subset N \times X$ is stable under the action of 
the abstract group $N(k)$ via $g \cdot (n,x) = (n, g \cdot x)$;
as a consequence, we have $P(g \cdot x,t) = P(x,t)$
identically on $X$, for any $g \in N(k)$. This still holds after 
base change by a field extension $K/k$, since the formation 
of $\Stab_N$ commutes with such base change and hence
$P$ is invariant under any such extension. Since $N(K)$
is dense in $N_K$ for any infinite field $K$, it follows that
$P$ is $N$-invariant. As $\cO(X)^N = \cO(Y)$ (Lemma 
\ref{lem:quot} (iv)), we see that $P \in \cO(Y)[t]$.

Choose an isomorphism $\bG_a \stackrel{\sim}{\to} N$
and consider the morphism
\[ \varphi = (P, \id) : \bG_{a,Y} \longrightarrow \bG_{a,Y},
\quad (t,y) \longmapsto (P(y,t),y). \]
Then $\varphi$ is an endomorphism of the $Y$-group scheme
$\bG_{a,Y}$. Moreover, $\varphi$ is faithfully flat, as follows
from the fiberwise flatness criterion (see \cite[IV.11.3.11]{EGA}),
since $\bG_{a,Y}$ is faithfully flat over $Y$ and for any $y \in Y$,
the morphism $\varphi_y : t \mapsto P(y,t)$ is faithfully flat.
Denote by $K$ the kernel of $\varphi$. Then we have 
$K \times_Y X = \Stab_N$; thus, $K$ is finite and flat over $Y$,
by Lemma \ref{lem:stab} and descent. Moreover, the square
\[ \xymatrix{
K \times_Y \bG_{a,Y} \ar[r]^-{m} \ar[d] _{\pr} 
& \bG_{a,Y} \ar[d]^{\varphi} \\
\bG_{a,Y} \ar[r]^-{\varphi} & \bG_{a,Y} \\
} \]
is cartesian, where $m$ denotes the group law, and $\pr$
the projection (indeed, $P(t,y) = P(u,y)$ if and only if
$(u-t,y) \in K$). So $\varphi$ is a $K$-torsor. The action
\[ \alpha : \bG_{a,Y} \times_Y X = \bG_a \times X
\longrightarrow X, \quad (t,x) \longmapsto t \cdot x \]
is a $K$-invariant morphism. By descent again, it follows
that there is a unique morphism 
$\beta : \bG_a \times X \to X$ such that the triangle 
\[ \xymatrix{
\bG_a \times X \ar[rr]^-{\alpha} \ar[rd]_{\varphi}  & &
X  \\
& \bG_a \times X \ar[ru]_{\beta} \\
} \]
commutes. Thus,
$\beta(t,x) = \alpha( P(f(x),t),x)$ identically on $\bG_a \times X$.
In particular, $\beta(0,x) = \alpha(0,x) = x$ identically on $X$.
Also, $\beta$ satisfies the associativity property of an action,
since so does $\alpha$ and $\varphi$ is faithfully flat. So 
$\beta$ is an action of $\bG_{a,Y}$ on $X$. Consider the 
associated morphism
\[ \delta : \bG_a \times X \longrightarrow X \times_Y X, 
\quad (t,x) \longmapsto (\beta(t,x),x) \]
as a morphism of $X$-schemes. For any field extension $K/k$
and any $x \in X(K)$, we get a morphism
$\delta_x : \bG_{a,K} \to X_x$ such that 
$\delta_x \circ P_x = \alpha_x$. Thus, $\delta_x$ is an isomorphism
by the construction of $P$. In view of the fiberwise isomorphism
criterion (see \cite[IV.17.9.5]{EGA}), it follows that $\delta$ is
an isomorphism. So $f$ is a $\bG_{a,Y}$-torsor relative to this
action $\beta$.
\end{proof}

\section{Proofs of the main results}
\label{sec:proofs}

\subsection{Proof of Theorem \ref{thm:hom}}
\label{subsec:proofhom}

We first consider the case where $X$ is equipped with a 
$k$-rational point $x_0$. Then $X = G/H$ for some subgroup 
$H \subset G$. If $G$ is a torus, then $G/H$ has the structure 
of a split torus, and hence is isomorphic to $(\bA^{\times})^n$ 
for some integer $n \geq 0$.
Otherwise, $G$ admits a normal subgroup $N \simeq \bG_a$
by Lemma \ref{lem:normal}. If $N \subset H$ then 
$X \simeq (G/N)/(H/N)$ and we conclude by induction on 
$\dim(G)$. So we may assume that $N \not\subset H$. 
Then we have a morphism
\[ f : X = G/H \longrightarrow G/NH \simeq (G/N)/(NH/N). \]
Moreover, $f$ is a $\bG_a$-torsor by Lemma \ref{lem:quot}
(if $p = 0$) and Lemma \ref{lem:add} (if $p > 0$). 
By induction on $\dim(G)$ again, we may assume that 
$Y \simeq \bA^m \times (\bA^{\times})^n$ as a variety. In particular, 
$Y$ is affine, and hence the $\bG_a$-torsor $f$ is trivial. So 
$X \simeq \bA^1 \times Y \simeq \bA^{m + 1} \times (\bA^{\times})^n$ 
as a variety.

To complete the proof, it suffices to show that every homogeneous
$G$-variety has a $k$-rational point. This follows from a result of
Rosenlicht (see \cite[Thm.~10]{Ro56}) and is reproved in 
\cite[Thm.~15.11]{Borel}, \cite[Thm.~14.3.13]{Springer}. 
For completeness, we present a proof based on the following
lemma, also due to Rosenlicht (see \cite[Lem., p.~425]{Ro56}):

\begin{lemma}\label{lem:curve}
Let $X$ be a homogeneous variety under $G = \bG_a$ or
$\bG_m$. Then $X$ has a $k$-rational point.
\footnote{This lemma is reproved in \cite[Prop.~15.6]{Borel}, but
the argument there is unclear to me. In modern language, 
it is asserted that every smooth, geometrically rational curve 
is an open subvariety of a smooth complete curve of genus $0$. 
Yet this fails for nontrivial forms of the affine line, see 
\cite[Lem.~1.1]{Russell}. Also, it is asserted that the $G$-action 
on $X$ extends to an action on its regular completion;
this requires a proof.}
\end{lemma}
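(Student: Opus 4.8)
The plan is to show that $X$ is a torsor under $\bG_a$ or $\bG_m$ and then to invoke the vanishing of the relevant first cohomology group, since a torsor under an affine $k$-group is trivial precisely when it has a $k$-rational point. First I would dispose of the case $\dim X = 0$: here $X = \Spec(K)$ for a finite separable extension $K/k$, and since the connected group $G_{\kb}$ acts trivially on the finite set $X_{\kb}$, the homogeneity condition (surjectivity of $(\id,\alpha)$) forces $X_{\kb}$ to be a single point, so $X = \Spec(k)$ and we are done. Thus I may assume $\dim X = 1$, so that $X$ is a smooth geometrically integral affine curve which becomes, over $\kb$, a single orbit of $G_{\kb}$.

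The core step is to identify the stabilizer. Let $\Stab_G \subset G \times X$ be the pullback of the diagonal under $(\pr_X,\alpha) : G \times X \to X \times X$; this is a subgroup scheme of $\bG_{a,X}$ or $\bG_{m,X}$. Because $G$ is commutative, a direct functorial computation shows that $\Stab_G$ is stable under the $G$-action $g\cdot(n,x) = (n, g\cdot x)$, and that all its geometric fibers coincide as subgroups of $G_{\kb}$. In the case $G = \bG_m$, the geometric stabilizer is the unique subgroup $\mu_n \subset \bG_m$ of its order; by rigidity of groups of multiplicative type (\cite[Cor.~12.37]{Milne}) the family $\Stab_G$ is the constant subgroup scheme $\mu_n \times X$, pulled back from the $k$-subgroup $\mu_n \subset \bG_m$. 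In the case $G = \bG_a$ with $p = 0$, the stabilizer is trivial outright. In either situation the action factors through a simply transitive action of $G/H \simeq G$ (where $H = \mu_n$, resp.\ $H = \{e\}$), so that $X$ is a torsor under a group isomorphic to $\bG_m$, resp.\ $\bG_a$.

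The remaining case $G = \bG_a$ with $p > 0$ is the main obstacle, precisely the point where the classical ``genus $0$'' argument breaks down for forms of the affine line. Here $\Stab_G$ is a finite flat subgroup scheme of $\bG_{a,X}$, so by Lemma~\ref{lem:ker} it equals $\Ker(P,\id)$ for a unique monic additive polynomial $P \in \cO(X)[t]$. The $G$-stability of $\Stab_G$ noted above, together with the uniqueness in Lemma~\ref{lem:ker}, yields $\alpha^*P = \pr_X^*P$; that is, the coefficients of $P$ are $G$-invariant. Since $X_{\kb}$ is a single orbit and $X$ is geometrically integral, we have $\cO(X)^G = k$, whence $P \in k[t]$ and $H := \Ker(P : \bG_a \to \bG_a)$ is a finite $k$-subgroup. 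As before, the action then factors through $\bG_a/H \simeq \bG_a$, exhibiting $X$ as a $\bG_a$-torsor. This mirrors the invariance argument in the proof of Lemma~\ref{lem:add}, with the base $Y$ replaced by $\Spec(k)$.

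In all cases $X$ is a torsor under $\bG_m$ or $\bG_a$. By Hilbert's Theorem~90 we have $\rH^1(k,\bG_m) = 0$, and $\rH^1(k,\bG_a) = 0$ holds as well; hence the torsor is trivial and $X$ acquires a $k$-rational point. I expect the genuine difficulty to be concentrated in the third paragraph: descending the additive polynomial $P$ to $k[t]$, i.e.\ establishing $\cO(X)^G = k$ together with the $G$-invariance of $\Stab_G$, since this is exactly the subtlety that invalidates the naive completion argument criticized in the footnote.
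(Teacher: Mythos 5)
Your argument is correct, but it takes a genuinely different route from the paper's. The paper never leaves the geometry of curves: it passes to the regular completion $\bar{X}$, identifies $\bar{X}_K \simeq \bP^1_K$ after a finite separable base change, and analyzes the reduced boundary $Z = \bar{X}\setminus X$ --- in the $\bG_a$ case $Z$ is a single reduced point, necessarily $\Spec(k)$; in the $\bG_m$ case a linear-disjointness argument with $L = k(X)$ shows both boundary points are $k$-rational --- so that $\bar{X} \simeq \bP^1$ and $X$ acquires a $k$-point. You instead descend the stabilizer (constant along orbits because $G$ is commutative) to a $k$-subgroup $H \subset G$, exhibit $X$ as a torsor under $G/H \simeq \bG_a$ or $\bG_m$, and invoke $\rH^1(k,\bG_m) = \rH^1(k,\bG_a) = 0$. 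Your route avoids completions entirely (thereby sidestepping exactly the pitfalls the footnote criticizes in Borel's proof), reuses the paper's own characteristic-$p$ machinery (Lemma \ref{lem:ker} and the invariance trick of Lemma \ref{lem:add}) where the paper's proof of this lemma does not need it, and is the argument that extends by d\'evissage to homogeneous spaces under arbitrary split solvable groups; what the paper's route buys is that it is cohomology-free and requires no descent of the stabilizer. Two points in your sketch need patching, though both are routine: (1) the finiteness and flatness of $\Stab_G \to X$ cannot be quoted from Lemma \ref{lem:stab}, which is stated for $X = G/H$ and so presupposes exactly the rational point you are trying to produce; obtain it instead by base change to $\kb$, where $X_{\kb} \simeq G_{\kb}/H'$ so that lemma applies, followed by faithfully flat descent of finiteness and flatness. (2) In the $\bG_m$ case, \cite[Cor.~12.37]{Milne} (triviality of actions of connected group schemes on groups of multiplicative type) is not literally the constancy statement you need for the family $\Stab_G \subset \bG_{m,X}$; it is cleaner to note that $\mu_{n,\kb}$ fixes every $\kb$-point of $X_{\kb}$, so the fixed-point subscheme $X^{\mu_n}$ is a closed subscheme of the reduced scheme $X$ containing all closed points of $X_{\kb}$, whence $X^{\mu_n} = X$; the action then factors through $\bG_m/\mu_n \simeq \bG_m$ with trivial geometric stabilizers, which is the torsor structure you want.
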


\begin{proof}
Since $X$ is a smooth curve, it admits a unique regular completion 
$\bar{X}$, i.e., $\bar{X}$ is a regular projective curve
equipped with an open immersion $X \to \bar{X}$. 
Moreover, $\bar{X}$ is geometrically integral since so is $X$.
We identify $X$ with its image in $\bar{X}$, and denote by
$Z = \bar{X} \setminus X$ the closed complement, equipped
with its reduced subscheme structure. Then 
$Z = \coprod_{i = 1}^n \Spec(K_i)$, where the $K_i/k$
are finite extensions of fields.  

By the smoothness of $X$ again, we may choose a finite separable 
extension $K/k$ such that $X$ has a $K$-rational point $x_0$. 
Then $(X_K,x_0)$ is a homogeneous space under $G_K$, and hence 
is isomorphic to $G_K$ as a variety. Also, $\bar{X}_K$ is 
the regular completion of $X_K$; moreover, $Z_K$ is reduced and 
$\bar{X}_K \setminus X_K = Z_K$. Since $X_K \simeq \bA^1_K$
or $\bA^{\times}_K$, it follows that $\bar{X}_K \simeq \bP^1_K$;
in particular, $\bar{X}$ is a smooth projective curve of genus $0$. 
This identifies $Z_K$ with $\Spec(K)$ (the point at infinity) 
if $G = \bG_a$, resp.~with
$\Spec(K) \coprod \Spec(K) = \{ 0, \infty \}$ if $G = \bG_m$.

In the former case, we have $Z = \Spec(k)$ and hence
$\bar{X}$ has a $k$-rational point. Thus, $\bar{X} \simeq \bP^1$ ,
so that $X$ has a $k$-rational point as well.

In the latter case, let $L = k(X)$; then $L/k$ is separable and
$X_L$ has an $L$-rational point. Thus, we see as above that
$\bar{X}_L \simeq \bP^1_L$ and this identifies $Z_L$ with
$\{ 0, \infty  \}$. In particular, $Z(L) = Z(K)$. Since $K$ and
$L$ are linearly disjoint over $k$, it follows that $Z(k)$
consists of two $k$-rational points; we then conclude as above.
\end{proof}

Returning to a homogeneous variety $X$ under a split solvable
group $G$, we may  choose $N \triangleleft G$ such that 
$N \simeq \bG_a$ or $\bG_m$ (Lemma \ref{lem:normal}). 
Also, we may choose a finite Galois extension $K/k$ such that 
$X$ has a $K$-rational point $x_0$. Let $H = \Stab_{G_K}(x_0)$; 
then $(X_K,x_0)$ is the homogeneous space $G_K/H$, and hence 
there is a geometric quotient 
\[ f : X_K = G_K/H \longrightarrow G_K/N_K H \] 
(Lemma \ref{lem:quot}). Then $f$ is a categorical quotient,
and hence is unique up to unique isomorphism. By Galois
descent (which applies, since all considered varieties are affine),
we obtain a $G$-equivariant morphism $\varphi : X \to Y$ such 
that $\varphi_K = f$. In particular, $Y$ is a homogeneous variety
under $G/N$. Arguing by induction on $\dim(G)$, we may
assume that $Y$ has a $k$-rational point $y$. Then the fiber
$X_y$ is a homogeneous $N$-variety, and hence has 
a $k$-rational point.

\subsection{Proof of Theorem \ref{thm:op}}
\label{subsec:proofop}

We may freely replace $X$ with any dense open $G$-stable 
subvariety. In view of Rosenlicht's theorem on rational 
quotients mentioned in the introduction, we may thus assume 
that there exist a variety $Y$ and a $G$-invariant morphism
\[ f : X \longrightarrow Y \] 
such that $k(Y) \stackrel{\sim}{\to} k(X)^G$ 
and the fiber of $f$ at every $y \in Y$ is a homogeneous
variety under $G_{\kappa(y)}$, where $\kappa(y)$ denotes 
the residue field at $y$. By generic flatness, we may further
assume that $f$ is flat.

Denoting by $\eta$ the generic point of $Y$, the fiber
$X_{\eta}$ is a homogeneous variety under $G_{\eta} = G_{k(Y)}$. 
By Theorem \ref{thm:hom}, this yields an isomorphism
\begin{equation}\label{eqn:gen} 
Z_{\eta} \stackrel{\sim}{\longrightarrow} X_{\eta}, 
\end{equation}
where $Z = \bA^m \times (\bA^{\times})^n$ for unique integers
$m,n \geq 0$. This yields in turn a birational map
\[ \varphi : Z \times Y \dasharrow X \] 
such that $f \circ \varphi = \pr_Y$ as rational maps.

It suffices to show that there exists a dense open 
subvariety $Y_0 \subset Y$ such that $\varphi$ is defined
on $Z \times Y_0$ and yields an open immersion
$Z \times Y_0 \to X$ with $G$-stable image. For this,
we start with some reductions. 

We may assume that $Y$ is affine (by replacing $X$ with
the preimage of a dense open affine subvariety) and also
that $X$ is normal (since its normal locus is a dense 
open $G$-stable subvariety). In view of a result of
Sumihiro (see \cite[Thm.~3.9]{Sumihiro}), we may further assume
that $X$ is a locally closed $G$-stable subvariety of
the projectivization $\bP(V)$, where $V$ is a finite-dimensional
$G$-module. The closure $\bar{X}$ of $X$ in $\bP(V)$ and its 
boundary $\bar{X} \setminus X$ are $G$-stable. By a version
of Borel's fixed point theorem (see \cite[IV.4.3.2]{DG}),
there exist a positive integer $N$ and a nonzero
$s \in H^0(\bar{X},\cO(N))$ which vanishes identically on
$\bar{X} \setminus X$ and is a $G$-eigenvector. Then the
dense open subvariety $\bar{X}_s$ is affine, $G$-stable and
contained in $X$; thus, we may further assume that $X$ is affine.
This replaces $Y$ with a dense open subset $Y_0$ (as $f$
is flat and hence open). As $Y$ is affine, we may choose 
a nonzero $t \in \cO(Y)$ which vanishes identically on 
$Y \setminus Y_0$. Replacing $X$ with $X_t$ and $Y$ with $Y_t$, 
we may finally assume that $X$, $Y$ are affine and $X$ is normal.

Choose a closed immersion of $Y$-varieties 
$X \to \bA^N \times Y$; then $\varphi$ yields a rational map
\[ (\varphi_1,\ldots,\varphi_N,\pr_Y) : 
Z \times Y \dasharrow \bA^N \times Y \] 
such that the pull-back $Z_{\eta} \to \bA^N_{\eta}$ 
is a closed immersion. In particular, 
$\varphi_1,\ldots,\varphi_N \in \cO(Z_{\eta}) 
= \cO(Z) \otimes_k k(Y)$. 
Replacing again $Y$ with a dense open affine subvariety,
we may thus assume that 
$\varphi_1,\ldots,\varphi_N \in \cO(Z) \otimes_k \cO(Y)
= \cO(Z \times Y)$.
As a consequence, $\varphi$ is a morphism.

Denote by $\Isol(\varphi)$ the set of points of $Z \times Y$
which are isolated in their fiber; then $\Isol(\varphi)$
contains the points of $Z_{\eta}$. By Zariski's Main Theorem
(see \cite[III.4.4.3]{EGA}), $\Isol(\varphi)$ is open in 
$Z \times Y$ and the restriction of $\varphi$ to $\Isol(\varphi)$
factors as
\[ \Isol(\varphi) \stackrel{\psi}{\longrightarrow}
 X' \stackrel{\gamma}{\longrightarrow} X,
\]
where $\psi$ is an open immersion and $\gamma$ is finite.
Replacing $X'$ with the schematic image of $\psi$,
we may assume that $\psi$ is schematically dominant; then
$X'$ is a variety. Since $\varphi$ is birational,
so is $\gamma$; as $X$ is normal, it follows that $\gamma$
is an isomorphism. Thus, $\varphi$ restricts to an open
immersion $\Isol(\varphi) \to X$.

Consider the closed complement 
$F = (Z \times Y) \setminus \Isol(\varphi)$. 
Then $F_{\eta}$ is empty, and hence the ideal
$I(F) \subset \cO(Z \times Y)$ satisfies 
$1 \in I(F) \otimes_{\cO(Y)} k(Y)$. Replacing
$Y$ with a principal open subvariety, we may thus
assume that $1 \in I(F)$, i.e., $F$ is empty and
$\Isol(\varphi) = Z \times Y$. Equivalently,
$\varphi : Z \times Y \to X$ is an open immersion.

It remains to show that the image of $\varphi$
is $G$-stable. The isomorphism (\ref{eqn:gen})
is equivariant relative to some action
$\alpha : G_{\eta} \times_{\eta} Z_{\eta} \to Z_{\eta}$.
We may view $\alpha$ as a morphism
$G \times Z \times \eta \to Z$, i.e., a family
$(x_1,\ldots,x_m,y_1,\ldots,y_n)$, where 
$x_1,\ldots,x_m \in \cO(G \times Z \times \eta)$ and 
$y_1,\ldots,y_n \in \cO(G \times Z \times \eta)^{\times}$
(the group of invertible elements). Shrinking $Y$ again,
we may assume that 
$x_1,\ldots,x_m \in \cO(G \times Z \times Y)$ and 
$y_1,\ldots,y_n \in \cO(G \times Z \times Y)^{\times}$.
Then $\alpha$ is given by a morphism 
$G \times Z \times Y \to Z$, i.e., an action of $G_Y$
on $Z \times Y$. Moreover, $\varphi$ is $G_Y$-equivariant,
since so is $\varphi_{\eta}$. This completes the proof 
of Theorem \ref{thm:op}.

The proof of Corollary \ref{cor:add} is completely similar; 
the point is that the generic fiber $X_{\eta}$ is a 
nontrivial $\bG_{a,\eta}$-homogeneous variety, 
and hence is isomorphic to $\bA^1_{\eta}$ on which 
$\bG_{a,\eta}$ acts via a monic additive polynomial 
$P \in k(Y)[t]$ (Lemma \ref{lem:curve}).
We leave the details to the reader.

\begin{remark}\label{rem:final}
(i) Theorem \ref{thm:hom} may be reformulated as follows:
every homogeneous variety $X$ under a split solvable 
algebraic group $G$ is affine and satisfies
\[ \cO(X) \simeq 
k[x_1, \ldots, x_m,y_1, y_1^{-1}, \ldots, y_n, y_n^{-1}], \]
where $x_1, \ldots, x_m, y_1, \ldots, y_n$ are algebraically
independent. So the invertible elements of the algebra
$\cO(X)$ are exactly the Laurent monomials
$c y_1^{a_1} \cdots y_n^{a_n}$, where $c \in k^{\times}$ and
$a_1, \ldots, a_n \in \bZ$. As a consequence, the projection
\[ f: X \longrightarrow (\bA^{\times})^n \] 
is uniquely determined (but the projection $X \to \bA^m$ is not: 
as an example, $k[x,y,y^{-1}] \simeq k[x + P(y),y,y^{-1}]$ for any
$P \in k[t]$). In fact, $f$ is the quotient by the unipotent
part $U$ of $G$, as follows fom the proof of Theorem
\ref{thm:hom}.

\medskip \noindent
(ii) Likewise, in the setting of Theorem \ref{thm:op},
the projection $X_0 \to (\bA^{\times})^n \times Y$ is the rational
quotient by $U$. This theorem is known, in a more
precise formulation, for a variety $X$ equipped with
an action of a connected reductive algebraic group $G$
over an algebraically closed field of characteristic $0$.
Then one considers the action of a Borel subgroup of
$G$, and uses the ``local structure theorem'' as in
\cite[Satz~2.3]{Kn90}. The dimension of $Y$ is the complexity
of the $G$-action on $X$, and $n$ is its rank; both are
important numerical invariants of the action 
(see e.g. \cite[Chap.~2]{Timashev}).

These invariants still make sense in positive characteristics,
and the local structure theorem still holds in a weaker form
(see \cite[Satz~1.2]{Kn93}). Theorem \ref{thm:op} gives additional 
information in this setting.

\medskip \noindent
(iii) Corollary \ref{cor:add} also holds for a variety $X$ equipped
with a nontrivial action of the multiplicative group: there exist
a variety $Y$, a nonzero integer $n$ and an open immersion
$\varphi : \bA^{\times} \times Y \to X$ such that 
$g \cdot \varphi(x,y) = \varphi(g^n x, y)$ identically. This follows 
from the fact that every nontrivial $\bG_{m,\eta}$-homogeneous 
variety is isomorphic to $\bA^{\times}_{\eta}$ on which 
$\bG_{m,\eta}$ acts by the $n$th power map for some $n \neq 0$.

This extends to the action of a split torus $T$: using 
\cite[Cor.~3.11]{Sumihiro}, one reduces to the case where  
$X$ is affine and $T$ acts via a free action of a quotient
torus $T'$. Then the quotient $X \to Y$ exists and is 
a $T'$-torsor, see \cite[Exp.~IX, Thm.~5.1]{SGA3} 
for a much more general result.
\end{remark}

\bibliographystyle{amsalpha}

\end{document}